\newcommand{\@KEYWORDS}{\@empty}
\newcommand{\KEYWORDS}[1]{\renewcommand{\@KEYWORDS}{#1}}
\newcommand{\makekeywords}{\begin{flushleft}{\footnotesize \textbf{\textit{Keywords ---}} \textit{\@KEYWORDS}}\end{flushleft}}
\newcommand{\@AMS}{\@empty}
\newcommand{\AMS}[1]{\renewcommand{\@AMS}{#1}}
\newcommand{\makeams}{\begin{flushleft}{\footnotesize \textbf{\textit{AMS (2010) ---}} \textbf{\@AMS}}\end{flushleft}}
\newcommand{\R}{\mathbb R}
\newcommand{\N}{\mathbb N}
\newcommand{\set}[2][]{\ifthenelse{\equal{#1}{\empty}}{\lbrace #2 \rbrace}{\lbrace #2\vert\, #1\rbrace}}
\newcommand{\ball}[1]{B_{#1}}
\newcommand{\matrank}{\operatorname{rank}}  
\def\INT{
  \@ifnextchar[
    {\INT@i}
    {\INT@i[]}
}
\def\INT@i[#1]{
  \@ifnextchar[
    {\INT@ii{#1}}
    {\INT@ii{#1}[]}
}
\def\INT@ii#1[#2]#3#4{
\ensuremath{\int_{#1}^{#2}{#3}\,\mathit d{#4}}
}
\newcommand{\seq}[2][]{(#2)_{#1}}
\newcommand{\rpartial}{\widehat\partial}
\newcommand{\ind}[1]{\delta_{#1}}
\newcommand{\eR}{\overline{\R}}
\newcommand{\Graph}{\operatorname{Graph}}
\newcommand{\dom}{\operatorname{dom}}
\newcommand{\epi}{\operatorname{epi}}
\newcommand{\abs}[1]{\vert #1 \vert}
\newcommand{\norm}[2][]{\Vert #2 \Vert_{#1}}
\newcommand{\vnorm}[2][]{\vert #2 \vert_{#1}}
\newcommand{\scal}[2]{\left\langle #1,#2 \right\rangle}
\newcommand{\map}[3]{#1\colon #2 \to #3}
\newcommand{\smap}[3]{#1\colon #2 \rightrightarrows #3}
\newcommand{\fto}{\overset{f}{\to}}
\newcommand{\proj}[2]{\mathrm{proj}_{#1}(#2)}
\newcommand{\st}{s.t.\xspace}
\newcommand{\dist}{\operatorname{dist}}
\newcommand{\menv}[2]{\ensuremath{e_{#1}{#2}}}
\newcommand{\prox}[2]{\ensuremath{P_{#1}#2}}
\newcommand{\spSob}[2][]{\ensuremath{H^{#2\ifthenelse{\equal{#1}{\empty}}{}{,#1}}}}
\newcommand{\spC}[1]{\ensuremath{C^{#1}}}
\newcommand{\ds}{\displaystyle}
\let\phitmp\varphi
\let\varphi\phi
\let\phi\phitmp
\newcommand{\eps}{\varepsilon}
\newcommand{\enquote}[1]{``#1''}
\renewcommand{\o}{\mathrm{o}}
\definecolor{NoteColor}{rgb}{1,0.4,0}
\definecolor{SkyRed}{rgb}{.56,.06,.15}
\definecolor{dSkyBlue}{rgb}{.11,.2,.62}
\definecolor{bSkyBlue}{rgb}{.13,.73,.92}
\newcommand{\leadingzero}[1]{\ifnum #1<10 0\the#1\else\the#1\fi}
\newcommand{\todayIII}{\leadingzero{\day}/\leadingzero{\month}/\the\year}
\newcounter{THMCTR}
\newtheorem{THM}[THMCTR]{Theorem}
\newtheorem{PROP}[THMCTR]{Proposition}
\newtheorem{DEF}[THMCTR]{Definition}
\newtheorem{LEM}[THMCTR]{Lemma}
\newtheorem{COR}[THMCTR]{Corollary}
\newtheorem{ASS}[THMCTR]{Assumption}
\newtheorem{ALG}{\textbf{\textup{Algorithm}}}
\theoremstyle{remark}
\newcounter{EXCTR}
\newtheorem{REM}[EXCTR]{Remark}
\newtheorem{EX}[EXCTR]{Example}
\crefname{THM}{Theorem}{Theorems}
\crefname{PROP}{Proposition}{Propositions}
\crefname{DEF}{Definition}{Definitions}
\crefname{DEFPROP}{DECIDE DEF OR PROP}{DECIDE DEF OR PROP}
\crefname{LEM}{Lemma}{Lemmas}
\crefname{COR}{Corollary}{Corollaries}
\crefname{REM}{Remark}{Remarks}
\crefname{PARA}{Paraphrase}{Paraphrases}
\crefname{ASS}{Assumption}{Assumptions}
\crefname{EX}{Example}{Examples}
\crefname{ALG}{Algorithm}{Algorithms}
\crefname{part}{Part}{Parts}
\crefname{chapter}{Chapter}{Chapters}
\crefname{section}{Section}{Sections}
\crefname{subsection}{Section}{Sections}
\crefname{subsubsection}{Section}{Sections}
\crefname{figure}{Figure}{Figures}
\crefname{table}{Table}{Tables}
\newcommand{\MARK}[2][]{%
\color{SkyRed}#2\ifthenelse{\equal{#1}{\empty}}{}{%
  \marginpar{\textcolor{dSkyBlue}{\ensuremath{\clubsuit} \large \textsc TODO}\\\color{dSkyBlue}\footnotesize#1\hspace*{\fill}\\\color{black}}%
  \textcolor{SkyRed}{\ensuremath{{}^\clubsuit}}%
  }\color{black}\xspace%
}
\newcommand{\tr}{\mathrm{trace}} 
\renewcommand{\k}{k}
\newcommand{\kp}{{k+1}}
\newcommand{\kpp}{{k+2}}
\newcommand{\kppp}{{k+3}}
\newcommand{\km}{{k-1}}
\definecolor{miared}{rgb}{0.7,0.15,0}
\definecolor{miablue}{rgb}{0.058,0.047,.596} 
\definecolor{mygreen}{rgb}{0.058,.596,0.047}
\newcommand{\F}{F}
\newcommand{\FF}{\mathcal F}
\pgfplotsset{
  tick label style={font=\footnotesize},
  label style={font=\footnotesize},
  legend style={font=\footnotesize}
}
\newcommand{\dimN}{N}
\newcommand{\dimM}{M}
\newcommand{\dimR}{R}
\newcommand{\dimD}{D}
\newcommand{\lip}{\mathrm{lip}}
\newcommand{\floc}{T}
\newcommand{\rge}{\mathrm{rge}\,}
\newcommand{\KL}{Kurdyka--{\L}ojasiewicz\xspace}
\newcommand{\ivalcc}[1]{[#1]}
\newcommand{\ivalco}[1]{[#1[}
\newcommand{\ivaloo}[1]{]#1[}
\newcommand{\ivaloc}[1]{]#1]}
\newcommand{\ipianodelta}{\kappa}
\newcommand{\setsep}{\,:\,}
\colorlet{changeColor}{red!90!black}
\def\codePATH{./figures}
\title{Local Convergence of the Heavy-ball Method and iPiano for Non-convex Optimization}
\author{Peter Ochs\\
  Mathematical Optimization Group\\
  Saarland University,
  Germany \\
  {\tt\small ochs@math.uni-sb.de}
}
\begin{document}

\maketitle

\begin{abstract}
A local convergence result for an abstract descent method is proved. The sequence of iterates is attracted by a local (or global) minimum, stays in its neighborhood and converges within this neighborhood. This result allows algorithms to exploit local properties of the objective function. In particular, the abstract theory in this paper applies to the inertial forward--backward splitting method: iPiano---a generalization of the Heavy-ball method. Moreover,  it reveals an equivalence between iPiano and inertial averaged/alternating proximal minimization and projection methods. Key for this equivalence is the attraction to a local minimum within a neighborhood and the fact that, for a prox-regular function, the gradient of the Moreau envelope is locally Lipschitz continuous and expressible in terms of the proximal mapping. In a numerical feasibility problem, the inertial alternating projection method significantly outperforms its non-inertial variants.
\end{abstract}

\makekeywords
\makeams

\section{Introduction}

In non-convex optimization, we often content ourselves with local properties of the objective function. Exploiting local information such as smoothness or prox-regularity around the optimum yields a local convergence theory. Local convergence rates can be obtained or iterative optimization algorithms can be designed that depend on properties that are available only locally around a local optimum. For revealing such results, it is crucial that the generated sequence, once entered such a neighborhood of a local optimum, stays within this neighborhood and converges to a limit point in the same neighborhood. 

As an illustrative example, suppose a point close to a local minimizer can be found by a global method, for example, by exhaustive search. In a neighborhood of the local minimizer, we can switch to a more efficient local algorithm. The local attraction of the local minimum assures that the generated sequence of iterates stays in this neighborhood, i.e. the sequence does not escape to a different local minimum, and there is no need to switch back to the (slow) global method, exhaustive search.

An important example of local properties, which we are going to exploit in this paper, is the fact that the Moreau envelope of a prox-regular function is locally well-defined and its gradient is Lipschitz continuous and expressible using the proximal mapping---a result that is well known for convex functions. Locally, this result can be applied to gradient-based iterative methods for minimizing objective functions that involve a Moreau envelope of a function. We pursue this idea for the Heavy-ball method \cite{Polyak64,ZK93} and iPiano \cite{OCBP14,Ochs15} (inertial version of forward--backward splitting) and obtain new algorithms for non-convex optimization such as inertial alternating/averaged proximal minimization or projection methods. The convergence result of the Heavy-ball method and iPiano translates directly to these new methods in the non-convex setting. The fact that a wide class of functions is prox-regular extends the applicability of these inertial methods significantly.\\

Prox-regularity was introduced in \cite{PR96} and comprises primal-lower-nice (introduced by Poliquin \cite{Poliquin91}), lower-$\mathcal C^2$, strongly amenable (see for instance \cite{Rock98}), and proper lower semi-continuous convex functions. It is known that prox-regular functions (locally) share some favorable properties of convex functions, e.g. the formula for the gradient of a Moreau envelope. Indeed a function is prox-regular if and only if there exists an ($f$-attentive) localization of the subgradient mapping that is monotone up to a multiple of the identity mapping \cite{PR96}. 
%
In \cite{ABS13}, prox-regularity is key to prove local convergence of the averaged projection method using the gradient descent method, which is a result that has motivated this paper.

The convergence proof of the gradient method in \cite{ABS13} follows a general paradigm that is currently actively used for the convergence theory in non-convex optimization. The key is the so-called \KL (KL) property \cite{Kurd98,Loj63,Loj93,BDL06,BDLS07}, which is known to be satisfied by semi-algebraic \cite{BCR98}, globally subanalytic functions \cite{BDL06b}, or more generally, functions that are definable in an $\o$-minimal structure \cite{BDLS07,Dries98}. 
Global convergence of the full sequence generated by an abstract algorithm to a stationary point is proved for functions with the KL property. The algorithm is abstract in the sense that the generated sequence is assumed to satisfy a \emph{sufficient descent condition}, a \emph{relative error condition}, and a \emph{continuity condition}, however no generation process is specified. 

The following works have also shown global convergence using the KL property or earlier versions thereof. The gradient descent method is considered in \cite{AMA05,ABS13}, the proximal algorithm is analyzed in \cite{AB09,ABS13,BDLM10,BS15}, and the non-smooth subgradient method in \cite{Noll13,Hosseini15}. Convergence of forward--backward splitting (proximal gradient algorithm) is proved in \cite{ABS13}. Extensions to a variable metric are studied in \cite{CPR14}, and in \cite{BLPPR16} with line search. A block coordinate descent version is considered in \cite{XY17} and a block coordinate variable metric method in \cite{CPR16}. A flexible relative error handling of forward--backward splitting and a non-smooth version  of the Levenberg--Marquardt algorithm is explored in \cite{FGP14}. For proximal alternating minimization, we refer to \cite{ABRS10} for an early convergence result of the iterates, and to \cite{BST14} for proximal alternating linearized minimization. 

Inertial variants of these algorithms have also been examined. \cite{OCBP14} establishes convergence of an inertial forward--backward splitting algorithm, called iPiano. \cite{OCBP14} assumes the non-smooth part of the objective to be convex, whereas \cite{Ochs15} and \cite{BCL15} prove convergence in the full non-convex setting, i.e. when the algorithm is applied to minimizing the sum of a smooth non-convex function with Lipschitz gradient and a proper lower semi-continuous function. An extension to an inertial block coordinate variable metric version was studied in \cite{Ochs16}. Bregman proximity functions are considered in \cite{BCL15}. A similar method was considered in \cite{BC16} by the same authors. The convergence of a generic multi-step method is proved in \cite{LFP16} (see also \cite{JM16}). A slightly weakened formulation of the popular accelerated proximal gradient algorithm from convex optimization was analyzed in \cite{LL15}. Another fruitful concept from convex optimization is that of composite objective functions involving linear operators. This problem is approached in \cite{STP17,LP15b}. Key for the convergence results is usually a decrease condition on the objective function or an upper bound of the objective. The Lyapunov-type idea is studied in \cite{LP15a,LP16a,LP15b}. Convergence of the abstract principle of majorization minimization methods was also analyzed in a KL framework \cite{ODBP15,BP16}. 

The global convergence theory of an unbounded memory multi-step method was proposed in \cite{LFP16}. Local convergence was analyzed under the additional partial smoothness assumption. In particular local linear convergence of the iterates is established. Although the fruitful concept of partial smoothness is very interesting, in this paper, we focus on convergence results that can be inferred directly from the KL property. In the general abstract setting, local convergence rates were analyzed in \cite{FGP14,LP17} and for inertial methods in \cite{LP17,JM16}. More specific local convergence rates can be found in \cite{AB09,MP10,ABRS10,XY13,BST14,CPR16}.


%
While the abstract concept in \cite{ABS13} can be used to prove global convergence in the non-convex setting for the gradient descent method, forward--backward splitting, and several other algorithms, it seems to be limited to single-step methods. Therefore, \cite{OCBP14} proved a slightly different result for abstract descent methods, which is applicable to multi-step methods, such as the Heavy-ball method and iPiano. In \cite{Ochs16}, an abstract convergence result is proved that unifies \cite{ABS13,FGP14,OCBP14,Ochs15}.

\paragraph{Contribution.} In this paper, we develop the \emph{local convergence theory} for the abstract setting in \cite{OCBP14}, in analogy to the local theory in \cite{ABS13}. Our local convergence result shows that, for multi-step methods such as the \emph{Heavy-ball method} or \emph{iPiano},
{a sequence that is initialized close enough to a local minimizer
\begin{itemize} 
  \item \emph{stays in a neighborhood of the local minimum} and 
  \item \emph{converges to a local minimizer} instead of a stationary point.
 \end{itemize}%
  This result  allows us to apply the formula for the gradient of the Moreau envelope of a prox-regular function to all iterates, which has far-reaching consequences and has not been explored algorithmically before. We obtain \emph{several new algorithms} for non-convex optimization problems. Conceptionally the algorithms are known from the convex setting or from their non-inertial versions, however \emph{there are no guarantees for the inertial versions in the non-convex setting}.
%
\begin{itemize}
  \item The Heavy-ball method applied to the sum of distance functions to prox-regular sets (resp. the sum of Moreau envelopes of prox-regular functions) coincides with the inertial averaged projection method (resp. the inertial averaged proximal minimization) for these prox-regular sets  (resp. functions). 
  \item iPiano applied to the sum of the distance function to a prox-regular set (resp. the Moreau envelope of a prox-regular function) and a simple non-convex set (resp. function) leads to the inertial alternating projection method (resp. inertial alternating proximal minimization) for these two sets (resp. functions).
\end{itemize}
Of course, these algorithms are only efficient when the associated proximal mappings or projections are simple  (efficient to evaluate). Beyond these local results, we provide \emph{global convergence guarantees} for the following methods:
\begin{itemize}
  \item The (relaxed) alternating projection method for the feasibility problem of a convex set and a non-convex set.
  \item An inertial version of the alternating projection method (iPiano applied to the distance function to a convex set over a non-convex constraint set).
  \item An inertial version of alternating proximal minimization (iPiano applied to the sum of the Moreau envelope of a convex function and a non-convex function). 
\end{itemize}

Moreover, we transfer \emph{local convergence rates} depending on the KL exponent of the involved functions to the methods listed above. This result builds on a recent classification of local convergence rates depending on the KL exponent from \cite{LP17,JM16} (which extends results from \cite{FGP14}).

\paragraph{Outline.}
Section~\ref{sec:prelim} introduces the notation and definitions that are used in this paper.  In Section~\ref{subsec:abstract-conv-global} the conditions for global convergence of abstract descent methods \cite{OCBP14,Ochs15} are recapitulated. The main result for abstract descent methods, the \emph{attraction of local (or global) minima}, is developed and proved in Section~\ref{subsec:abstract-conv-local}. Then, the abstract local convergence results are \emph{verified for iPiano} (hence the Heavy-ball method) in Section~\ref{sec:review-of-iPiano}. The \emph{equivalence} to inertial averaged/alternating minimization/projection methods is analyzed in Section~\ref{sec:alt-avg-prox-min}. Section~\ref{sec:experiment-feasibility} shows a numerical example of a \emph{feasibility problem}.

\section{Preliminaries} \label{sec:prelim}

Throughout this paper, we will always work in a finite dimensional Euclidean vector space $\R^\dimN$ of dimension $\dimN\in\N$, where $\N:=\{1,2,\ldots\}$. The vector space is equipped with the standard Euclidean norm $\vnorm\cdot$ that is induced by the standard Euclidean inner product $\vnorm\cdot = \sqrt{\scal\cdot\cdot}$. We denote by $\ball \eps (\bar x):=\set{x\in\R^\dimN\setsep \vnorm{x-\bar x}\leq \eps}$ the ball of radius $\eps>0$ around $\bar x\in \R^\dimN$. 

As usual, we consider extended real-valued functions $\map f{\R^\dimN}{\eR}$, where $\eR:=\R\cup\set{+\infty}$, which are defined on the whole space with \emph{domain} given by $\dom f := \set{x\in\R^\dimN \setsep f(x)< +\infty}$. A function is called \emph{proper} if $\dom f \neq \emptyset$. We define the \emph{epigraph} of the function $f$ as $\epi f:= \set{(x,\mu)\in\R^{\dimN+1} \setsep \mu \geq f(x)}$. The \emph{indicator function} $\ind C$ of a set $C\subset\R^N$ is defined by $\ind C(x)=0$, if $x\in C$, and $\ind C(x)=+\infty$, otherwise. A \emph{set-valued mapping} $\smap{T}{\R^N}{\R^M}$, with $M,N\in\N$, is defined by its \emph{graph} $\Graph T:=\set{(x,v)\in \R^N\times\R^M \setsep v\in T(x)}$. The range of a set-valued mapping is defined as $\rge T:= \bigcup_{x\in \R^\dimN} T(x)$.

A key concept in optimization and variational analysis is that of Lipschitz continuity. Sometimes, also the term strict continuity is used, which we define as in \cite{Rock98}:
\begin{DEF}[strict continuity {\cite[Definition 9.1]{Rock98}}]
  A single-valued mapping $\map F{D}{\R^\dimM}$ defined on $D\subset \R^\dimN$ is \emph{strictly continuous} at $\bar x\in D$ if the value 
  \[
     \lip F (\bar x) := \limsup_{\substack{x,x^\prime \to \bar x\\ x\neq x^\prime}} \frac{\abs{F(x^\prime) - F(x)}}{\abs{x^\prime - x}} 
  \]
  is finite and $\lip F(\bar x)$ is the \emph{Lipschitz modulus} of $F$ at $\bar x$. This is the same as saying $F$ is locally Lipschitz continuous at $\bar x$ on $D$.
\end{DEF}
For convenience, we introduce \emph{$f$-attentive convergence}: A sequence $\seq[\k\in\N]{x^\k}$ is said to \emph{$f$-converge} to $\bar x$ if 
$(x^\k,f(x^\k))\to (\bar x , f(\bar x))$ as $\k\to \infty$, 
and we write $x^\k\fto \bar x$. 
\begin{DEF}[subdifferentials {\cite[Definition 8.3]{Rock98}}]
The \emph{Fr\'echet subdifferential} of $f$ at $\bar x \in\dom f$ is the set $\rpartial f(\bar x)$ of elements $v \in \R^\dimN$ such that
\[
  \liminf_{\substack{x\to \bar x\\ x\neq \bar x}} \frac{f(x) - f(\bar x) - \scal{v}{x-\bar x}}{\vnorm{x-\bar x}} \geq 0  \,.
\]
For $\bar x\not\in \dom f$, we set $\rpartial f(\bar x) = \emptyset$. 
The so-called \emph{(limiting) subdifferential} of $f$ at $\bar x\in\dom f$ is defined by
\[
  \partial f(\bar x) := \set{v\in \R^\dimN\setsep \exists\, x^n \fto \bar x,\;v^n\in \rpartial f(x^n),\;v^n \to v} \,,
\]
and $\partial f(\bar x) = \emptyset$ for $\bar x \not\in \dom f$. 
\end{DEF}
A point $\bar x\in \dom f$ for which $0\in \partial f(\bar x)$ is a called a \emph{critical point}. As a direct consequence of the definition of the limiting subdifferential, we have the following closedness property at any $\bar x\in \dom f$: 
\[
  x^\k \fto \bar x,\ v^\k\to \bar v,\ \text{and for all } \k\in\N\colon v^\k \in \partial f(x^\k)\quad \Longrightarrow\quad  \bar v\in \partial f(\bar x) \,.
\]
\begin{DEF}[Moreau envelope and proximal mapping {\cite[Definition 1.22]{Rock98}}]
For a function $\map {f}{\R^\dimN}{\eR}$ and $\lambda>0$, we define the \emph{Moreau envelope}
\[
  \menv \lambda f (x) := \inf_{w\in\R^\dimN}\, f(w) + \frac{1}{2\lambda}\vnorm{w-x}^2 \,,
\]
and the \emph{proximal mapping}
\[
  \prox \lambda f (x) := \arg\min_{w\in\R^\dimN}\, f(w) + \frac{1}{2\lambda}\vnorm{w-x}^2 \,.
\]
\end{DEF}
For a general function $f$ it might happen that $\menv \lambda f(x)$ takes the values $- \infty$ and the proximal mapping is empty, i.e. $\prox \lambda f (x) = \emptyset$. Therefore, the analysis of the Moreau envelope is usually coupled with the following property.
\begin{DEF}[prox-boundedness {\cite[Definition 1.23]{Rock98}}]
  A function $\map f{\R^\dimN}{\eR}$ is \emph{prox-bounded}, if there exists $\lambda >0$ such that $\menv \lambda f (x)> -\infty$ for some $x\in \R^\dimN$. The supremum of the set of all such $\lambda$ is the \emph{threshold} $\lambda_f$ of prox-boundedness for $f$.
\end{DEF}
In this paper, we focus on so-called prox-regular functions.  These functions have many favorable properties locally, which otherwise only convex functions exhibit. 
\begin{DEF}[prox-regularity of functions, {\cite[Definition 13.27]{Rock98}}]
A function $\map f{\R^\dimN}{\eR}$ is \emph{prox-regular} at $\bar x$ for $\bar v$ if $f$ is finite and locally lsc at $\bar x$ with $\bar v \in \partial f(\bar x)$, and there exists $\eps>0$ and $\lambda>0$ such that
\begin{gather*}
  f(x^\prime) \geq f(x) + \scal{v}{x^\prime - x} - \frac 1{2\lambda} \vnorm{x^\prime - x}^2 \quad \forall x^\prime \in \ball\eps (\bar x) \\
  \text{when}\ v\in \partial f(x),\ \vnorm{v-\bar v } < \eps,\ \vnorm{x-\bar x}<\eps,\ f(x) < f(\bar x) + \eps\,.
\end{gather*}
When this holds for all $\bar v\in \partial f(\bar x)$, $f$ is said to be prox-regular at $\bar x$. 
\end{DEF}
The largest value $\lambda>0$ for which this property holds is called the \emph{modulus of prox-regularity at $\bar x$}.
\begin{DEF}[prox-regularity of sets, {\cite[Exercise 13.31]{Rock98}}]
A set $C$ is \emph{prox-regular} at $\bar x$ for $\bar v$ when the indicator function $\ind C$ of the set $C$ is prox-regular at $\bar x$ for $\bar v$. It is called \emph{prox-regular} at $\bar x$, when this is true for all $\bar v\in \partial \ind C(\bar x)$.
\end{DEF}

To observe that most functions in practice are prox-regular, we provide several examples.
\begin{EX} A function $\map{f}{\R^\dimN}{\eR}$ is prox-regular if, for example\footnote{For the exact statements, we provide accurate references.}, the function $f$ is
\begin{itemize}
  \item proper lower semi-continuous (lsc) convex \cite[Example 13.30]{Rock98},
  \item locally representable in the form $f=g-\rho\vnorm\cdot^2$ with $g$ being finite ($g<+\infty$) convex and $\rho>0$ \cite[Theorem 10.33]{Rock98},
  \item strongly amenable \cite[Definition 10.23, Proposition 13.32]{Rock98} (e.g. $\mathcal C^2$-functions, functions of the form $g\circ F$ with $F$ being $\mathcal C^2$ and $g$ being proper lsc convex, the maximum of $\mathcal C^2$-function),
  \item lower-$\mathcal C^2$ \cite[Definition 10.29, Proposition 13.33]{Rock98} (functions of the form $\max_{t\in T} f(x,t)$, where the zeroth, first and second derivative of $\map{f}{\R^\dimN\times T}{\R}$ w.r.t. to the first block of coordinates are continuous and $T$ is a compact space),
  \item a $\mathcal C^2$-perturbation of a prox-regular function \cite[Exercise 13.35]{Rock98},
  \item an indicator function of a closed convex set or of a strongly amenable set \cite[Definition 10.23]{Rock98}, 
  \item the Moreau envelope of a prox-regular prox-bounded function \cite[Proposition 13.37 ana 13.34]{Rock98} (e.g. the distance function of a prox-regular set), or
  \item the indicator function of a closed set $C$ and the distance function w.r.t. $C$ is continuously differentiable on $C\smallsetminus U$ for some open neighborhood $U$ \cite[Theorem 1.3]{PRT00}.
  \item For examples of prox-regular spectral functions, we refer to \cite{DLMS08}.
\end{itemize}
\end{EX}
\begin{EX}[Imaging problems]
  Several problems (image denoising, deblurring/deconvolution, zooming, depth map fusion, etc.) may be modeled as an optimization problem of the following form
  \[
    \min_{x\in \R^N}\, \frac 12\vnorm{Ax - b}^2 + \sum_{i=1}^{N} \varphi( \sqrt{(Dx)_i^2 + (Dx)_{i+N}^2})\,,
  \]
  where $A\in \R^{M\times N}$ (e.g. blurr operator), $b\in \R^M$ (e.g. blurry input image), $D\in \R^{2N\times N}$ (e.g. finite differences) with a continuous non-decreasing function $\map{\varphi}{\R_+}{\R_+}$. The objective is prox-regular, for example, under the following conditions: $\varphi$ is convex and non-decreasing; $\varphi(t)=t$ (TV-regularization); $\varphi(t) = \log(1+t^2)$ (student-t regularization); $\varphi(t) = \log(1+\abs{t})$ (at $0$ where it is not $\mathcal C^2$, the power series of $\log$ shows that $\log(1+\abs{t})-\abs{t}\in\mathcal C^2$, hence $\varphi$ is a $\mathcal C^2$-perturbation of a convex function).
\end{EX}
\begin{EX}[Support Vector Machine]
The goal to find a linear decision function may be formulated as the following optimization problem
\[
    \min_{w\in\R^N,\, b\in\R}\, \sum_{i=1}^M \mathscr L(\scal{w}{z_i}+b, y_i) + \varphi(w) \,, 
\]
where, for $i=1,\ldots,M$, $(z_i,y_i)\in \R^N\times\set{\pm1}$ is the training set, $\mathscr L$ is a loss function, and $\varphi$ a regularizer. Examples are the hinge loss $\mathscr L(\bar y_i,y_i) = \max(0, 1- \bar y_iy_i)$ (which is a maximum of $\mathcal C^2$-functions), the squared hinge loss, the logistic loss $\mathscr L(\bar y_i,y_i) = \log(1+e^{-\bar y_i y_i})$ (which are $\mathcal C^2$ function), etc. Prox-regular regularization functions $\varphi$ are, for example, the squared $\ell_2$-norm $\vnorm{x}^2$, or more in general $p$-norms $\norm[p]{x}^p=\sum_{i=1}^N \abs{x_i}^p$ with $p>0$ ($x_i\mapsto \abs{x_i}^p$ is $\mathcal C^2$ on $\R\smallsetminus\set0$ and obviously prox-regular at $\bar x=0$).
\end{EX}

For the proof of the Lipschitz property of the Moreau envelope, it will be helpful to consider a so-called localization. A \emph{localization} of $\partial f$ around $(\bar x,\bar v)$ is a mapping $\smap T{\R^\dimN}{\R^\dimN}$ whose graph is obtained by intersecting $\Graph \partial f$ with some neighborhood of $(\bar x, \bar v)$, i.e. $\Graph T = \Graph \partial f \cap U$ for a neighborhood $U$ of $(\bar x, \bar v)$. We talk about an \emph{$f$-attentive localization} when $\Graph T = \set{(x,v)\in \Graph \partial f\setsep (x,v) \in U\ \text{and}\ f(x) \in V}$ for a neighborhood $U$ of $(\bar x, \bar v)$ and a neighborhood $V$ of $f(\bar x)$.\\

Finally, the convergence result we build on is only valid for functions that have the KL property at a certain point of interest. This property is shared for example by semi-algebraic functions, globally subanalytic functions, or, more generally, functions definable in an $\o$-minimal structure. For details, we refer to \cite{BDL06,BDLS07}.
\begin{DEF}[\KL property / KL property {\cite{ABS13}}]\label{def:KL-property}
Let $\map f {\R^\dimN}{\eR}$ be an extended real valued function and let $\bar x\in\dom\partial f$. If there exists $\eta\in\ivalcc{0,\infty}$, a neighborhood $U$ of $\bar x$ and a continuous concave function $\map{\phi}{\ivalco{0,\eta}}{\R_+}$ such that 
\[
  \phi(0)=0,\quad  \phi\in\spC 1( 0,\eta ),\quad\text{and}\quad \phi^\prime(s)>0\text{ for all }s\in \ivaloo{0,\eta}\,,
\]
and for all $x\in U\cap \ivalcc{f(\bar x) < f(x) < f(\bar x) + \eta}$ the \KL inequality
\begin{equation}\label{eq:KL-ineq}
  \phi^\prime(f(x)-f(\bar x)) \norm[-]{\partial f(x)} \geq 1
\end{equation}
holds, then the function has the \KL property at $\bar x$, where $\norm[-]{\partial f(x)}:= \inf_{v\in \partial f(x)}\vnorm{v}$ is the \emph{non-smooth slope} (note: $\inf \emptyset := +\infty$).

If, additionally, the function is lsc and the property holds for each point in $\dom \partial f$, then $f$ is called \KL function.
\end{DEF}
If $f$ is closed and semi-algebraic, it is well-known \cite{Kurd98,BDLS07} that $f$ has the KL property at any point in $\dom\partial f$, and the \emph{desingularization function} $\phi$ in Definition~\ref{def:KL-property} has the form $\phi(s) = \frac{c}{\theta}s^\theta$ for $\theta\in \ivaloc{0,1}$ and some constant $c>0$. The parameter $\theta$ is known as the \emph{KL exponent}.

\section{Abstract Convergence Result for KL Functions} \label{sec:abstract-convergence}

In this section, we establish a local convergence result for abstract descent methods , i.e., the method is characterized by properties \ref{ass:descent}, \ref{ass:error}, \ref{ass:cont} (see below) instead of a specific update rule. The local convergence result is inspired by a global convergence result proved in \cite{OCBP14} for KL functions (see Theorem~\ref{thm:abstr-global-conv}), which itself is motivated by a slightly different result in \cite{ABS13}. The abstract setting in \cite{ABS13}, can be used to prove global and local convergence of gradient descent, proximal gradient descent and other (single-step) methods. However, it does not apply directly to inertial variants of these methods. Therefore, in this section, we prove the required adaptation of the framework in \cite{ABS13} to the one in \cite{OCBP14}. We obtain a local convergence theory that also applies to the Heavy-ball method and iPiano (see Section~\ref{sec:review-of-iPiano}). 

\subsection{Global Convergence Results} \label{subsec:abstract-conv-global}
The convergence result in \cite{OCBP14} is based on the following three abstract conditions for a sequence $(z^\k)_{\k\in\N}:=(x^\k,x^{\km})_{\k\in\N}$ in $\R^{2N}$, $x^\k\in\R^N$, $x^{-1}\in\R^N$. Fix two positive constants $a>0$ and $b>0$ and consider a proper lower semi-continuous (lsc) function $\map{\FF}{\R^{2N}}{\eR}$. Then, the conditions for $(z^\k)_{\k\in\N}$ are as follows:
\begin{enumerate}[label=(H\arabic*),ref=(H\arabic*)]
\item\label{ass:descent} For each $\k\in \N$, it holds that
\[
    \FF(z^{\kp}) + a \vnorm{x^\k - x^\km}^2 \leq \FF(z^\k)\,.
\]
\item\label{ass:error} For each $\k\in\N$, there exists $w^{\kp}\in\partial\FF(z^{\kp})$ such that
\[
    \vnorm{w^{\kp}} \leq \frac b2 (\vnorm{x^\k - x^\km} + \vnorm{x^\kp - x^\k})\,.
\]
\item\label{ass:cont} There exists a subsequence $(z^{\k_j})_{j\in\N}$ such that
\[
    z^{\k_j}\to \tilde z \quad \text{and}\quad \FF(z^{\k_j}) \to \FF(\tilde z)\,,\qquad \text{as } j\to\infty\,.
\]
\end{enumerate}

\begin{THM}[abstract global convergence, {\cite[Theorem 3.7]{OCBP14}}] \label{thm:abstr-global-conv}
Let $\seq[\k\in\N]{z^\k} = \seq[\k\in\N]{x^\k,x^{\km}}$ be a sequence that satisfies \ref{ass:descent}, \ref{ass:error}, and \ref{ass:cont} for a proper lsc function $\map{\FF}{\R^{2N}}\eR$ which has the KL property at the cluster point $\tilde z$ specified in \ref{ass:cont}.\\
Then, the sequence $\seq[\k\in\N]{x^\k}$ has finite length, i.e.
\begin{equation} \label{eq:thm:abstr-global-conv:finite-length-prop}
  \sum_{\k=1}^{\infty} \vnorm{x^\k-x^\km} < + \infty \,,
\end{equation}
and converges to $\bar z = \tilde z$ where $\bar z=(\bar x,\bar x)$ is a critical point of $\FF$.
\end{THM}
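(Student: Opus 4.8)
The plan is to prove convergence via the standard Kurdyka–Łojasiewicz machinery, which turns the three abstract conditions \ref{ass:descent}, \ref{ass:error}, \ref{ass:cont} into a finite-length estimate. The key quantities to track are the energy gap $\FF(z^\k) - \FF(\bar z)$ and the step sizes $\vnorm{x^\k - x^\km}$. First I would establish some preliminary facts that the three hypotheses give essentially for free: condition \ref{ass:descent} shows $(\FF(z^\k))_{\k\in\N}$ is non-increasing, hence convergent (bounded below along the subsequence by \ref{ass:cont} and the lsc property), to some limit which, combined with \ref{ass:cont}, must equal $\FF(\tilde z)$; summing \ref{ass:descent} also yields $\sum_\k \vnorm{x^\k - x^\km}^2 < \infty$, so in particular $\vnorm{x^\k - x^\km} \to 0$. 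A short argument then upgrades the cluster point $\tilde z$ identified in \ref{ass:cont} to the common value $\FF(\tilde z)$ of the whole energy sequence, and writing $\tilde z = (\tilde x, \tilde x)$ follows because $x^\k - x^\km \to 0$ forces both coordinates of any cluster point to coincide. Criticality of $\tilde z$ comes from the closedness property of the limiting subdifferential stated in the excerpt, applied to the $w^\k$ of \ref{ass:error}, which tend to $0$ since the steps do.

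Next I would run the core KL telescoping argument. Without loss of generality assume $\FF(z^\k) > \FF(\tilde z)$ for all $\k$ (the sequence stabilizes at the limit in finitely many steps otherwise, and is trivially finite-length). On the neighborhood $U$ of $\tilde z$ from the KL property I apply the KL inequality \eqref{eq:KL-ineq} to the points $z^\k$, using the desingularizing function $\phi$. Concavity of $\phi$ gives
\[
  \phi(\FF(z^\k) - \FF(\tilde z)) - \phi(\FF(z^\kp) - \FF(\tilde z)) \geq \phi^\prime(\FF(z^\k) - \FF(\tilde z))\,\big(\FF(z^\k) - \FF(z^\kp)\big)\,.
\]
I then bound the right-hand side below using the KL inequality $\phi^\prime(\FF(z^\k) - \FF(\tilde z)) \geq 1/\norm[-]{\partial\FF(z^\k)}$, the error bound \ref{ass:error} controlling $\norm[-]{\partial \FF(z^\k)}$ by the two most recent steps, and the descent bound \ref{ass:descent} controlling $\FF(z^\k) - \FF(z^\kp)$ from below by $a\vnorm{x^\k - x^\km}^2$. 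Introducing the shorthand $\Delta_\k := \phi(\FF(z^\k) - \FF(\tilde z)) - \phi(\FF(z^\kp) - \FF(\tilde z))$, this produces an inequality of the schematic form $\vnorm{x^\k - x^\km}^2 \lesssim \Delta_\k \cdot (\vnorm{x^\km - x^\kmm} + \vnorm{x^\k - x^\km})$, and the arithmetic–geometric mean inequality $2\sqrt{uv} \leq u + v$ converts this into a bound on $\vnorm{x^\k - x^\km}$ that is summable because $\sum_\k \Delta_\k$ telescopes to a finite value. Summing over $\k$ then delivers the finite-length property \eqref{eq:thm:abstr-global-conv:finite-length-prop}.

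Finally, finite length makes $(x^\k)$ a Cauchy sequence, hence convergent to some $\bar x$; then $z^\k = (x^\k, x^\km) \to (\bar x, \bar x) =: \bar z$, and since $\tilde z$ is a cluster point we get $\bar z = \tilde z$, which is critical by the preliminary step. \textbf{The main obstacle} I anticipate is a localization subtlety that the abstract statement papers over: the telescoping estimate is only valid while the iterates $z^\k$ remain inside the KL neighborhood $U$ \emph{and} the energy lies in the window $f(\tilde z) < \FF(z^\k) < f(\tilde z) + \eta$. One must therefore first argue that the tail of the sequence stays in $U$, which is a bootstrapping argument: choosing the initial index large enough that $x^{\k_0}$ is close to $\tilde x$ (possible by \ref{ass:cont}) and $\phi(\FF(z^{\k_0}) - \FF(\tilde z))$ is small, one shows inductively that the accumulated step lengths—controlled by the same telescoping sum one is trying to establish—are small enough to keep every subsequent iterate within $U$. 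Making this circular-looking induction rigorous, so that the finite-length bound and the containment in $U$ are proved simultaneously, is the delicate heart of the argument; the remaining algebra is routine.
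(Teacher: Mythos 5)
Your proposal is correct and follows essentially the same route as the cited proof in [OCBP14] (which this paper does not reproduce but imports via Lemma~\ref{lem:main-theorem-convergence}): monotonicity and square-summability from \ref{ass:descent}, criticality via the closedness of $\partial\FF$ applied to the $w^\k$ from \ref{ass:error}, the concavity/KL/AM--GM telescoping estimate for finite length, and the bootstrapping induction that keeps the tail of the iterates inside the KL neighborhood. You have also correctly identified the localization induction as the delicate step, which is exactly the content of Lemma~\ref{lem:main-theorem-convergence} in the paper.
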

\begin{REM}
  In view of the proof of this statement, it is clear that the same result can be established when \ref{ass:descent} is replaced by
  $
    \FF(z^{\kp}) + a \vnorm{x^\kp - x^\k}^2 \leq \FF(z^\k) \,.
  $
\end{REM}

\subsection{Local Convergence Results} \label{subsec:abstract-conv-local}

The upcoming local convergence result shows that, once entered a region of attraction (around a local minimizer), all iterates of a sequence $\seq[\k\in\N]{z^\k}$ satisfying \ref{ass:descent}, \ref{ass:error} and the following growth condition \ref{ass:local} stay in a neighborhood of this minimum and converge to a minimizer in the same neighborhood (not just a stationary point). For the convergence to a global minimizer, the growth condition \ref{ass:local} is not required.  \\

In the following, for $z\in \R^{2N}$ we denote by $z_1,z_2 \in \R^N$ the first and second block of coordinates, i.e. $z=(z_1,z_2)$. The same holds for other vectors in $\R^{2N}$.

\begin{enumerate}[label=(H\arabic*),ref=(H\arabic*)]
  \setcounter{enumi}{3}
  \item\label{ass:local} Fix $z^*\in \R^N$. For any $\delta >0$ there exist $0<\rho <\delta$ and $\nu>0$ such that
  \[
    z\in \ball \rho (z^*)\,,\ \FF(z) < \FF(z^*) + \nu\,,\ y_2\not\in \ball \delta (z_2^*) \ \Rightarrow \ 
    \FF(z) < \FF(y) + \frac a4 \vnorm{z_2 - y_2}^2 
  \]
  where $a$ is the same as in \ref{ass:descent}--\ref{ass:cont}.
\end{enumerate}
A simple condition that implies \ref{ass:local} is provided by the following lemma: 
\begin{LEM} \label{lem:local-growth}
  Let $\map{\FF}{\R^{2N}}\eR$ be a proper lsc function and $z^*=(x^*,x^*)\in \dom\FF$ a local minimizer of $\FF$. Suppose, for any $\delta >0$, $\FF$ satisfies the growth condition 
  \[
    \FF(y) \geq \FF(z^*) - \frac {a}{16} \abs{y_2 - z^*_2}^2 \quad \forall y\in \R^{2N}, y_2\not\in\ball \delta(z_2^*) \,.
  \]
  Then, $\FF$ satisfies \ref{ass:local}.
\end{LEM}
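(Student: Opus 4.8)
The plan is to verify \ref{ass:local} directly from the growth condition by a one-dimensional quadratic estimate, with explicit choices of the constants. Fix $\delta>0$ and set $\rho:=\delta/4$ (so $0<\rho<\delta$) and $\nu:=\frac{a}{16}\delta^2>0$. I would then take an arbitrary $z\in\ball{\rho}(z^*)$ with $\FF(z)<\FF(z^*)+\nu$ and an arbitrary $y\in\R^{2N}$ with $y_2\not\in\ball{\delta}(z_2^*)$, and show $\FF(z)<\FF(y)+\frac a4\vnorm{z_2-y_2}^2$. If $\FF(y)=+\infty$ the inequality is immediate, so I may assume $\FF(y)$ is finite.

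First I would record the two one-sided bounds that drive the argument. The hypothesis on $z$ gives the upper bound $\FF(z)<\FF(z^*)+\nu$. Since $y_2\not\in\ball{\delta}(z_2^*)$, the growth condition applied with this same $\delta$ gives the lower bound $\FF(y)\geq \FF(z^*)-\frac{a}{16}\vnorm{y_2-z_2^*}^2$. Writing $r:=\vnorm{y_2-z_2^*}$, note that $r>\delta$ because $\ball{\delta}(z_2^*)$ is the closed ball. Because $z\in\ball{\rho}(z^*)$ forces $\vnorm{z_2-z_2^*}\leq\vnorm{z-z^*}\leq\rho$, the triangle inequality yields $\vnorm{z_2-y_2}\geq r-\rho>0$.

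The crux is then the scalar estimate: combining the two bounds, it suffices to establish
\begin{equation*}
  g(r):=\tfrac a4(r-\rho)^2-\tfrac{a}{16}r^2\geq\nu\quad\text{for all }r>\delta\,,
\end{equation*}
since then $\FF(y)+\frac a4\vnorm{z_2-y_2}^2\geq \FF(z^*)-\frac{a}{16}r^2+\frac a4(r-\rho)^2=\FF(z^*)+g(r)\geq\FF(z^*)+\nu>\FF(z)$, which is exactly the conclusion of \ref{ass:local}. Expanding, $g(r)=\frac{3a}{16}r^2-\frac a2\rho r+\frac a4\rho^2$ is an upward parabola whose vertex lies at $r=\frac{4\rho}{3}=\frac\delta3<\delta$, so $g$ is increasing on $\ivalco{\delta,\infty}$ and its minimum over $r>\delta$ is attained at the endpoint; substituting $\rho=\delta/4$ gives $g(\delta)=\frac{5a}{64}\delta^2>\frac{a}{16}\delta^2=\nu$, closing the estimate.

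The only delicate point — and what I expect to be the main obstacle — is the balancing of constants: the coefficient $\frac a4$ in \ref{ass:local} must dominate the growth coefficient $\frac{a}{16}$ even after the loss incurred by passing from $\vnorm{y_2-z_2^*}$ to $\vnorm{z_2-y_2}$ through the triangle inequality. Choosing $\rho$ a sufficiently small fraction of $\delta$ (here $\delta/4$) keeps the vertex of $g$ to the left of $\delta$, so the worst case over the relevant range $r>\delta$ occurs at $r=\delta$ and remains bounded below by $\nu$; the local-minimizer hypothesis on $z^*$ serves only to fix $\FF(z^*)$ as the reference value and is not otherwise invoked in the estimate.
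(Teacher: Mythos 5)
Your proof is correct and follows essentially the same route as the paper's: lower-bound $\FF(y)$ by the growth condition, upper-bound $\FF(z)$ by $\FF(z^*)+\nu$, pass from $\vnorm{y_2-z_2^*}$ to $\vnorm{y_2-z_2}$ via a triangle-type inequality, and choose $\rho,\nu$ small relative to $\delta$. The only difference is cosmetic: the paper uses $(u+v)^2\leq 2(u^2+v^2)$ and leaves $\rho,\nu$ as ``sufficiently small,'' whereas you use the plain triangle inequality plus a one-variable quadratic minimization to exhibit the explicit choices $\rho=\delta/4$, $\nu=\tfrac{a}{16}\delta^2$, and both proofs correctly observe that the local-minimizer hypothesis is not actually invoked in the estimate.
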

\begin{proof}
  Let $\delta > \rho$ and $\nu $ be positive numbers. For $y=(y_1,y_2)\in \R^{2\dimN}$ with $y_2\not\in \ball \delta (z_2^*)$ and $z=(z_1,z_2)\in \ball \rho (z^*)$ such that $\FF(z) < \FF(z^*) + \nu$, we make the following estimation:  
  \[
    \begin{split}
      \FF(y) \geq&\ \FF(z^*) - \frac a{16} \vnorm{y_2 - z_2^*}^2 \\
             >&\ \FF(z) - \nu - \frac a8 \vnorm{y_2 - z_2^*}^2 + \frac a{16} \vnorm{y_2 - z_2^*}^2 \\
             \geq&\ \FF(z) - \nu - \frac a4 \vnorm{y_2 - z_2}^2 - \frac a4 \vnorm{z_2 - z_2^*}^2 + \frac a{16} \vnorm{y_2 - z_2^*}^2 \\
             \geq&\ \FF(z) - \frac a4 \vnorm{y_2 - z_2}^2 + (-\nu - \frac a4 \rho^2 + \frac a{16} \delta^2) \,.
    \end{split}
  \]
  For sufficiently small $\nu$ and $\rho$ the term in the parenthesis becomes positive, which implies \ref{ass:local}.
\end{proof}
We need another preparatory lemma, which is proved in \cite{OCBP14} 
\begin{LEM}[{\cite[Lemma 3.5]{OCBP14}}]\label{lem:main-theorem-convergence}
Let $\map{\FF}{\R^{2N}}{\eR}$ be a proper lsc function which satisfies the \KL property at some point $z^*=(z_1^*,z_2^*)\in \R^{2N}$. Denote by $U$, $\eta$ and $\map{\phi}{\ivalco{0,\eta}}{\R_+}$ the objects appearing in Definition~\ref{def:KL-property} of the KL property at $z^*$. Let $\sigma, \rho >0$ be such that $\ball\sigma (z^*)\subset U$ with $\rho \in \ivaloo{0,\sigma}$.

Furthermore, let $(z^\k)_{\k\in\N}=(x^\k,x^{\km})_{\k\in\N}$ be a sequence satisfying \ref{ass:descent}, \ref{ass:error}, and 
\begin{equation}\label{eq:mt-eqA}
  \forall \k\in\N:\quad z^\k\in \ball\rho(z^*) \Rightarrow z^{\kp}\in \ball\sigma(z^*)\text{ with } \FF(z^{\kp}),\FF(z^{\k+2}) \geq \FF(z^*) \,.
\end{equation}
Moreover, the initial point $z^0=(x^0,x^{-1})$ is such that $\FF(z^*) \leq \FF(z^0) < \FF(z^*) + \eta$ and 
\begin{equation}\label{eq:mt-eqD}
\vnorm{x^*-x^0} + \sqrt{\frac {\FF(z^0) - \FF(z^*)}a} + \frac ba \phi(\FF(z^0) - \FF(z^*)) < \frac \rho 2 \,.
\end{equation}
Then, the sequence $(z^\k)_{\k\in\N}$ satisfies
\begin{equation}
  \forall \k\in\N: z^\k\in \ball \rho(z^*),\quad \sum_{\k=0}^\infty \vnorm{x^\k - x^\km} < \infty,\quad \FF(z^\k) \to \FF(z^*), \text{ as } \k\to\infty\,,
\end{equation}
$(z^\k)_{\k\in\N}$ converges to a point $\bar z=(\bar x,\bar x)\in \ball\sigma(z^*)$ such that $\FF(\bar z) \leq \FF(z^*)$.
If, additionally, \ref{ass:cont} is satisfied, then $0\in \partial \FF(\bar z)$ and $\FF(\bar z) = \FF(z^*)$.
\end{LEM}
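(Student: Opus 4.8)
The plan is to run the standard \KL finite-length argument, but with the extra care the multi-step structure demands: the relative-error bound \ref{ass:error} couples two consecutive increments $\vnorm{x^k-x^{k-1}}$ and $\vnorm{x^{k+1}-x^k}$, and the \KL inequality may only be invoked at points lying in the neighborhood $U$. I would first normalize by setting $r_k := \FF(z^k)-\FF(z^*)$; by \ref{ass:descent} the sequence $(r_k)$ is non-increasing, and \eqref{eq:mt-eqA} together with the hypothesis $\FF(z^0)\ge\FF(z^*)$ keeps $r_k\ge 0$ for the indices we reach, so that $\phi(r_k)$ is well defined. If some $r_k=0$, descent forces the iterates to be eventually constant and there is nothing to prove; hence assume $r_k>0$ throughout.

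The per-step estimate is the engine of the proof. Assuming $z^{k+1}\in U$ and $0<r_{k+1}<\eta$, I would chain together four ingredients evaluated at $z^{k+1}$: the descent inequality $r_{k+1}-r_{k+2}\ge a\vnorm{x^{k+1}-x^k}^2$ from \ref{ass:descent}; the error bound $\norm[-]{\partial\FF(z^{k+1})}\le \tfrac b2(\vnorm{x^k-x^{k-1}}+\vnorm{x^{k+1}-x^k})$ from \ref{ass:error}; the \KL inequality $\phi'(r_{k+1})\,\norm[-]{\partial\FF(z^{k+1})}\ge 1$ of Definition~\ref{def:KL-property}; and the concavity bound $\phi(r_{k+1})-\phi(r_{k+2})\ge \phi'(r_{k+1})(r_{k+1}-r_{k+2})$. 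This yields
\[
  \vnorm{x^{k+1}-x^k}^2 \le \frac{b}{2a}\,\big(\phi(r_{k+1})-\phi(r_{k+2})\big)\,\big(\vnorm{x^k-x^{k-1}}+\vnorm{x^{k+1}-x^k}\big)\,,
\]
and after one application of the arithmetic--geometric mean inequality it linearizes to a recursion of the form $3\vnorm{x^{k+1}-x^k}\le \vnorm{x^k-x^{k-1}}+\tfrac{2b}{a}\big(\phi(r_{k+1})-\phi(r_{k+2})\big)$. Summed over $k$, the $\phi$-terms telescope, which is exactly what produces a finite total length and, incidentally, the shape of the initial-smallness condition \eqref{eq:mt-eqD}.

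The main obstacle, and the genuinely new part relative to the single-step theory, is to prove by induction that $z^k\in\ball\rho(z^*)$ for every $k$, so that the per-step estimate is actually licensed at each iterate. The bootstrapping uses the gap between $\rho$ and $\sigma$: as long as $z^k\in\ball\rho(z^*)$, condition \eqref{eq:mt-eqA} guarantees $z^{k+1}\in\ball\sigma(z^*)\subset U$ (so the \KL inequality may be applied at $z^{k+1}$) and that $r_{k+1},r_{k+2}\ge 0$ (so the desingularization terms are defined). I would then carry, as induction hypothesis, a bound on the running length $\sum_{j\le k}\vnorm{x^j-x^{j-1}}$; telescoping the linear recursion and feeding in \eqref{eq:mt-eqD} (whose three summands bound, respectively, the initial distance $\vnorm{x^0-x^*}$, the first increment via $\vnorm{x^0-x^{-1}}\le\sqrt{r_0/a}$, and the tail $\sum_{j\ge 2}\vnorm{x^j-x^{j-1}}$ via $\tfrac ba\phi(r_0)$) keeps $\vnorm{x^{k+1}-x^*}$ small enough that $z^{k+1}$ in fact returns to $\ball\rho(z^*)$. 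This closes the induction, gives $z^k\in\ball\rho(z^*)$ for all $k$, and establishes $\sum_k\vnorm{x^k-x^{k-1}}<\infty$.

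Finally I would read off the conclusions. Finite length makes $(x^k)$ Cauchy, so $x^k\to\bar x$ and $z^k=(x^k,x^{k-1})\to\bar z=(\bar x,\bar x)$, with $\bar z\in\overline{\ball\rho(z^*)}\subset\ball\sigma(z^*)$ since $\rho<\sigma$. To see $\FF(z^k)\to\FF(z^*)$, note $r_k\downarrow r_\infty\ge 0$ and $\vnorm{x^k-x^{k-1}}\to 0$; if $r_\infty>0$ then $r_\infty\in\ivaloo{0,\eta}$ and the error bound forces $\norm[-]{\partial\FF(z^{k+1})}\to 0$, whence the \KL inequality gives $\phi'(r_{k+1})\to\infty$, contradicting continuity of $\phi'$ at $r_\infty$; hence $r_\infty=0$. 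Lower semicontinuity then yields $\FF(\bar z)\le\liminf_k\FF(z^k)=\FF(z^*)$. If in addition \ref{ass:cont} holds, its subsequential limit must coincide with the already-identified limit $\bar z$, giving $\FF(\bar z)=\FF(z^*)$; and since $w^{k+1}\in\partial\FF(z^{k+1})$ with $\vnorm{w^{k+1}}\le\tfrac b2(\vnorm{x^k-x^{k-1}}+\vnorm{x^{k+1}-x^k})\to 0$ and $z^{k+1}\fto\bar z$, the closedness of the limiting subdifferential delivers $0\in\partial\FF(\bar z)$.
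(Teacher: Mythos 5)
Your proposal is correct, and it reproduces essentially the argument of the proof this paper defers to (the lemma is quoted from \cite[Lemma 3.5]{OCBP14} and not reproved here): the same four-ingredient per-step estimate at $z^{\kp}$, the same AM--GM linearization (your recursion $3\vnorm{x^{\kp}-x^\k}\leq\vnorm{x^\k-x^\km}+\tfrac{2b}{a}(\phi(r_{\k+1})-\phi(r_{\k+2}))$ telescopes to exactly the inequality $\sum_{\k\geq K}\vnorm{x^\k-x^\km}\leq\tfrac12\vnorm{x^K-x^{K-1}}+\tfrac ba\phi(\FF(z^K)-\FF(z^*))$ that this paper later imports as \cite[Inequality (6)]{OCBP14}), and the same $\rho$-versus-$\sigma$ bootstrap via \eqref{eq:mt-eqA} to license the KL inequality at each iterate. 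The only cosmetic quibble is the indexing of which single increment the middle term $\sqrt{(\FF(z^0)-\FF(z^*))/a}$ of \eqref{eq:mt-eqD} controls, which is immaterial to the argument.
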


Under Assumption~\ref{ass:local}, the following theorem establishes the local convergence result. Note that, thanks to Lemma~\ref{lem:local-growth}, a global minimizer automatically satisfies \ref{ass:local}. 
\begin{THM}[abstract local convergence] \label{thm:local-convergence-abstract}
Let $\map{\FF}{\R^{2N}}\eR$ be a proper lsc function which has the KL property at some local (or global) minimizer $z^*= (x^*,x^*)$ of $\FF$. Assume \ref{ass:local} holds at $z^*$.\\
Then, for any $r>0$, there exist $u\in\ivaloo{0,r}$ and $\mu >0$ such that the conditions 
\begin{equation} \label{eq:cond-local-min-thm}
  z^0 \in \ball u (z^*)\,,\qquad \FF(z^*) < \FF(z^0) < \FF(z^*) + \mu \,,
\end{equation}
imply that any sequence $\seq[\k\in\N]{z^\k}$ that starts at $z^0$ and satisfies \ref{ass:descent} and \ref{ass:error} has the finite length property \eqref{eq:thm:abstr-global-conv:finite-length-prop} and remains in $\ball r(z^*)$ and converges to some $\bar z \in \ball r(z^*)$, a critical point of $\FF$ with $\FF(\bar z) = \FF(z^*)$. For $r$ sufficiently small, $\bar z$ is a local minimizer of $\FF$.
\end{THM}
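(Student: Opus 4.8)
The plan is to reduce the statement to Lemma~\ref{lem:main-theorem-convergence} by choosing the radii so tightly that its structural hypothesis \eqref{eq:mt-eqA} is forced on \emph{every} admissible sequence, and then to recover \ref{ass:cont} a posteriori from the local minimality of $z^*$. First I would fix the scales. Since $z^*$ is a local minimizer and $\FF$ has the KL property at $z^*$, let $U$, $\eta$ and $\phi$ be the objects of Definition~\ref{def:KL-property}, and choose $\sigma\in\ivaloc{0,r}$ so small that $\ball\sigma(z^*)\subset U$ and $\FF(z)\geq\FF(z^*)$ for all $z\in\ball\sigma(z^*)$. Then apply \ref{ass:local} with $\delta:=\sigma/\sqrt2$, obtaining $\rho\in\ivaloo{0,\delta}$ and $\nu>0$; note that $\rho^2+\delta^2<2\delta^2=\sigma^2$. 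Finally, using continuity of $\phi$ with $\phi(0)=0$, pick $\mu\in\ivaloo{0,\min\{\nu,\eta\}}$ and $u\in\ivaloo{0,\rho/2}\subset\ivaloo{0,r}$ small enough that \eqref{eq:mt-eqD} holds whenever $\vnorm{x^0-x^*}\leq u$ and $\FF(z^0)-\FF(z^*)<\mu$, which is exactly \eqref{eq:cond-local-min-thm}.

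The heart of the argument is to verify \eqref{eq:mt-eqA} for these radii, namely that $z^\k\in\ball\rho(z^*)$ implies $z^\kp\in\ball\sigma(z^*)$ together with $\FF(z^\kp),\FF(z^\kpp)\geq\FF(z^*)$. The value bound $\FF(z^\k)<\FF(z^*)+\nu$ needed to invoke \ref{ass:local} is automatic: by \ref{ass:descent} the values $\FF(z^\k)$ are non-increasing and $\FF(z^0)<\FF(z^*)+\mu\leq\FF(z^*)+\nu$. For the ball-invariance I would argue by contradiction. If $\vnorm{x^\kp-x^*}>\delta$, then the second block $x^\kp$ of $z^\kpp$ lies outside $\ball\delta(z_2^*)$, so \ref{ass:local} with $z=z^\k$, $y=z^\kpp$ gives $\FF(z^\k)<\FF(z^\kpp)+\frac a4\vnorm{x^\km-x^\kp}^2$; summing \ref{ass:descent} over the two descent steps from $\k$ to $\kpp$ and using $t_1^2+t_2^2\geq\tfrac12(t_1+t_2)^2$ with the triangle inequality yields $\FF(z^\k)\geq\FF(z^\kpp)+\frac a2\vnorm{x^\kp-x^\km}^2$, a contradiction. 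Hence $\vnorm{x^\kp-x^*}\leq\delta$, so $\vnorm{z^\kp-z^*}^2\leq\delta^2+\rho^2<\sigma^2$ and $z^\kp\in\ball\sigma(z^*)$, which gives $\FF(z^\kp)\geq\FF(z^*)$ by the choice of $\sigma$.

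The bound $\FF(z^\kpp)\geq\FF(z^*)$ is the main obstacle, as it is the step that genuinely needs the quantitative choice $\delta\leq\sigma/\sqrt2$ and a multi-step telescoping of the descent. If it failed, then $z^\kpp\notin\ball\sigma(z^*)$; since $\vnorm{x^\kp-x^*}\leq\delta\leq\sigma/\sqrt2$, this forces $\vnorm{x^\kpp-x^*}^2>\sigma^2-\delta^2\geq\delta^2$, so the second block $x^\kpp$ of $z^\kppp$ lies outside $\ball\delta(z_2^*)$. Applying \ref{ass:local} with $z=z^\k$, $y=z^\kppp$ gives $\FF(z^\k)<\FF(z^\kppp)+\frac a4\vnorm{x^\km-x^\kpp}^2$, whereas by $(t_1+t_2+t_3)^2\leq 3(t_1^2+t_2^2+t_3^2)$ one has $\vnorm{x^\km-x^\kpp}^2\leq 3\big(\vnorm{x^\k-x^\km}^2+\vnorm{x^\kp-x^\k}^2+\vnorm{x^\kpp-x^\kp}^2\big)$, and the three-step sum of \ref{ass:descent} bounds $\FF(z^\k)-\FF(z^\kppp)$ below by $a$ times the same bracket; as $\tfrac34<1$ this contradicts \ref{ass:local}. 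This establishes \eqref{eq:mt-eqA}.

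With \eqref{eq:mt-eqA} verified, Lemma~\ref{lem:main-theorem-convergence} applies and yields the finite-length property \eqref{eq:thm:abstr-global-conv:finite-length-prop}, $z^\k\in\ball\rho(z^*)$ for all $\k$, $\FF(z^\k)\to\FF(z^*)$, and convergence to some $\bar z=(\bar x,\bar x)$ with $\FF(\bar z)\leq\FF(z^*)$. Since the ball is closed, $\bar z\in\ball\rho(z^*)\subset\ball\sigma(z^*)$, whence $\FF(\bar z)\geq\FF(z^*)$ and therefore $\FF(\bar z)=\FF(z^*)$. Consequently $z^\k\to\bar z$ and $\FF(z^\k)\to\FF(\bar z)$, so \ref{ass:cont} holds for the full sequence with cluster point $\bar z$; the ``additionally'' clause of Lemma~\ref{lem:main-theorem-convergence} then gives $0\in\partial\FF(\bar z)$, so $\bar z$ is a critical point with $\FF(\bar z)=\FF(z^*)$, lying together with the whole sequence in $\ball\rho(z^*)\subset\ball r(z^*)$. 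Finally, as $\bar z\in\ball\rho(z^*)$ lies in the interior of $\ball\sigma(z^*)$ and $\FF\geq\FF(z^*)=\FF(\bar z)$ throughout $\ball\sigma(z^*)$, the point $\bar z$ is a local minimizer; for this last conclusion it suffices that $r$ (hence $\sigma$) be small enough that $\ball\sigma(z^*)$ lies in a region on which $z^*$ minimizes $\FF$.
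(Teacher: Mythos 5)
Your proposal is correct and follows essentially the same route as the paper: fix the scales via the KL neighborhood, local minimality, and \ref{ass:local} with $\delta=\sigma/\sqrt2$; verify \eqref{eq:mt-eqA} by the two- and three-step telescoping of \ref{ass:descent} against the $\frac a4$-growth contradiction; choose $u,\mu$ to enforce \eqref{eq:mt-eqD}; and conclude via Lemma~\ref{lem:main-theorem-convergence}, recovering \ref{ass:cont} and criticality from $\FF(\bar z)=\FF(z^*)$. The only differences are cosmetic (e.g. bounding $\vnorm{x^{\km}-x^{\kpp}}^2$ by $3\sum$ rather than the paper's $2,4,4$ coefficients, and phrasing the $\FF(z^\kpp)\geq\FF(z^*)$ step contrapositively).
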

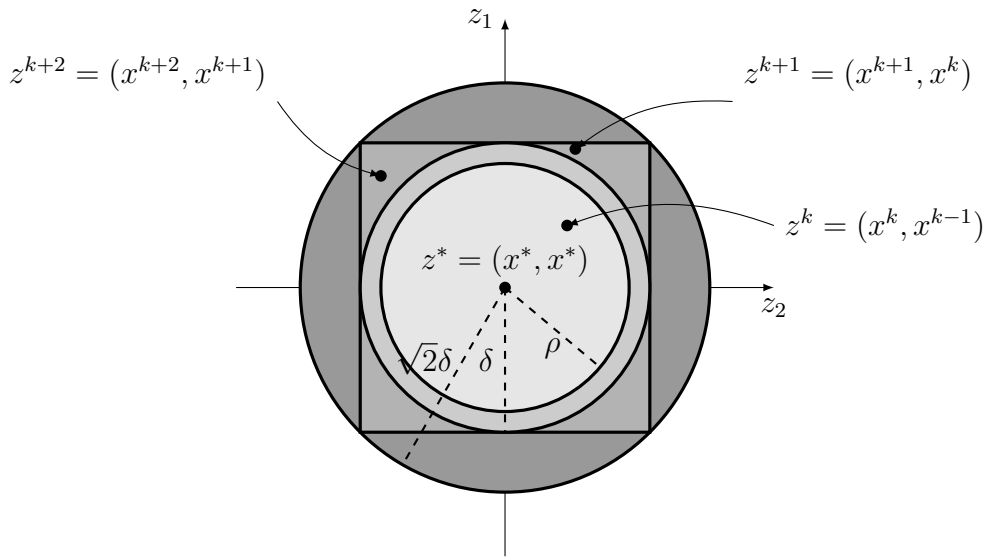
\begin{figure}
  \begin{center}
  \begin{tikzpicture}[scale=0.55]
    
    \draw[-latex] (-6.5,0) -- (6.5,0) node[below] {$z_2$};
    \draw[-latex] (0,-6.5) -- (0,6.5) node[left] {$z_1$};

    \draw[very thick,draw=black,fill=black!40] (0,0) circle [radius=4.949747cm];
    \draw[very thick,draw=black,fill=black!30] (-3.5,-3.5) rectangle (3.5,3.5);
    \draw[very thick,draw=black,fill=black!20] (0,0) circle [radius=3.5cm];
    \draw[very thick,draw=black,fill=black!10] (0,0) circle [radius=3cm];

    \fill (0,0) circle [radius=4pt] node[above] {$z^*=(x^*,x^*)$};
    \draw[thick,dashed] (0,0) -- (-90:3.5) node[pos=0.5,left] {$\delta$};
    \draw[thick,dashed] (0,0) -- (-120:4.949747) node[pos=0.4,left] {$\sqrt 2 \delta$};
    \draw[thick,dashed] (0,0) -- (-40:3) node[pos=0.5,below] {$\rho$};

    \fill (1.5,1.5) circle [radius=4pt]; 
    \draw[latex-] (1.5,1.5) to[bend left=20] (6.5,1.5) node[right] {$z^\k=(x^\k,x^\km)$};
    \fill (1.7,3.35) circle [radius=4pt]; 
    \draw[latex-] (1.7,3.35) to[bend left=20] (5.5,4.5) node[above right] {$z^\kp=(x^\kp,x^\k)$};
    \fill (-3,2.7) circle [radius=4pt]; 
    \draw[latex-] (-3,2.7) to[bend left=20] (-5.5,4.5) node[above left] {$z^\kpp=(x^\kpp,x^\kp)$};
  \end{tikzpicture}
  \end{center}
  \caption{\label{fig:proof-thm:local-convergence-abstract}An essential step of the proof of Theorem~\ref{thm:local-convergence-abstract} is to show: $z^\k\in \ball \rho(z^*)=\ball\rho(x^*,x^*)$ implies $x^\kpp, x^\kp \in \ball \delta (z^*_2)=\ball\delta(x^*)$ which restricts $z^\kp$ and $z^\kpp$ to the rectangle in the plot and thus to $\ball {\sqrt 2\delta}(z^*)$.}
\end{figure}
\begin{proof}
  Let $r>0$. Since $\FF$ satisfied the KL property at $z^*$ there exist $\eta_0 \in \ivaloc{0,+\infty}$, $\delta\in \ivaloo{0,r/\sqrt2}$ and a continuous concave function $\map \phi{\ivalco{0,\eta_0}}\R$ such that $\phi(0)=0$, $\phi$ is continuously differentiable and strictly increasing on $\ivaloo{0,\eta_0}$, and for all 
  \[
    z\in \ball  {\sqrt 2\delta} (z^*) \cap \ivalcc{\FF(z^*) < \FF(z) < \FF(z^*) + \eta_0}
  \]
  the KL inequality holds. As $z^*$ is a local minimizer, by choosing a smaller $\delta$ if necessary, one can assume that 
  \begin{equation} \label{eq:local-opt-cond}
    \FF(z) \geq \FF(z^*)\quad\text{ for all }\quad z\in \ball {\sqrt 2\delta}(z^*) \,.
  \end{equation}
  
  Let $0<\rho < \delta$ and $\nu>0$ be the parameters appearing in \ref{ass:local} with $\delta$ as in \eqref{eq:local-opt-cond}.  We want to verify the implication in \eqref{eq:mt-eqA} with $\sigma=\sqrt2\delta$. Let $\eta := \min(\eta_0,\nu)$ and $\k\in\N$. Assume $z^{0}, \ldots, z^{\k} \in \ball \rho (z^*)$, with $z^\k =: (z_1^\k,z_2^\k) = (x^\k,x^\km)\in \R^{\dimN\times2}$ and w.l.o.g. $\FF(z^*) < \FF(z^0) , \ldots, \FF(z^\k) < \FF(z^*) + \eta$ (note that if $\FF(z^\k) = \FF(z^*)$ the sequence is stationary ($x^k=x^{k+1}=x^{k+2}=\ldots$) by \ref{ass:descent} and the result follows directly). 

  See Figure~\ref{fig:proof-thm:local-convergence-abstract} for the idea of the following steps. First, note that $x^\k \in \ball \delta (z_2^*)$ as $z^\k \in \ball \delta (z^*)$. Suppose $z_2^\kpp=x^\kp \not\in \ball \delta (z_2^*)$. Then by \ref{ass:local} and \ref{ass:descent} we observe (use $(u+v)^2 \leq 2 (u^2 + v^2)$)
   \begin{multline*}
      \FF(z^\k) < \FF(z^\kpp) + \frac a4 \vnorm{x^\km - x^\kp}^2 \\
               \leq \FF(z^\k) - a\left( \vnorm{x^\kp - x^\k}^2 + \vnorm{x^\k - x^\km}^2\right) 
                               + \frac a4 \vnorm{x^\km - x^\kp}^2 
               \leq \FF(z^\k) \,,
  \end{multline*}
  which is a contradiction and therefore $z_2^\kpp\in \ball \delta (z_2^*)$. 
  
  Hence, due to the equivalence of norms in finite dimensions, $z^\kp = (x^\kp,x^\k) \in \ball {\sqrt2 \delta}(z^*)$. Thanks to \eqref{eq:local-opt-cond}, we have $\FF(z^\kp)\geq \FF(z^*)$. In order to verify \eqref{eq:mt-eqA}, we also need $\FF(z^\kpp) \geq \FF(z^*)$, which can be shown analogously, however we need to consider three iteration steps (that's the reason for the factor $\frac a4$ instead of $\frac a2$ on the right hand side of \ref{ass:local}). Assuming $z^\kppp_2 = x^\kpp \not\in \ball \delta (z_2^*)$ yields the following contradiction:
  \begin{multline*}
      \FF(z^\k) < \FF(z^\kppp) + \frac a4 \vnorm{x^\km - x^\kpp}^2 \\
               \leq \FF(z^\k) - a\left( \vnorm{x^\kpp - x^\kp}^2+\vnorm{x^\kp - x^\k}^2 + \vnorm{x^\k - x^\km}^2\right)  + \frac a4 \vnorm{x^\km - x^\kpp}^2  \\
               \leq \FF(z^\k) - a\left( \vnorm{x^\kpp - x^\kp}^2+\vnorm{x^\kp - x^\k}^2 + \vnorm{x^\k - x^\km}^2\right)  \\
               + \frac a4 \left( 2\vnorm{x^\kpp - x^\kp}^2+4\vnorm{x^\kp - x^\k}^2 + 4\vnorm{x^\k - x^\km}^2\right) 
               \leq \FF(z^\k) \,.
  \end{multline*}
  Therefore, $\FF(z^\kp),\FF(z^\kpp) \geq \FF(z^*)$ holds, which is exactly property \eqref{eq:mt-eqA} with $\sigma = \sqrt 2 \delta$. 
  
  Now, choose $u,\mu >0$ in \eqref{eq:cond-local-min-thm} such that
  \[
      \mu < \eta\,,\ u < \frac \rho 6\,,\ \sqrt{\frac \mu a} + \frac ba \phi(\mu) < \frac{\rho}3 \,.
  \]
  If $z^0$ satisfies \eqref{eq:cond-local-min-thm}, we have
  \[
    \vnorm{x^*-x^0} + \sqrt{\frac{\FF(z^0) - \FF(z^*)}a} + \frac ba \phi(\FF(z^0) - \FF(z^*)) < \frac \rho 2 \,,
  \]
  which is \eqref{eq:mt-eqD} with $\mu$ in place of $\eta$. Using Lemma~\ref{lem:main-theorem-convergence} we conclude that the sequence has the finite length property, remains in $\ball \rho(z^*)$, converges to $\bar z \in \ball \sigma(z^*)$, $\FF(z^\k) \to \FF(z^*)$ and $\FF(\bar z) \leq \FF(z^*)$, which is only allowed for $\FF(\bar z) = \FF(z^*)$. Therefore, the sequence also has property \ref{ass:cont}, and thus, $\bar z$ is a critical point of $\FF$. The property in \eqref{eq:local-opt-cond} shows that $\bar z$ is a local minimizer for sufficiently small $r$.
\end{proof}
\begin{REM}
The assumption in \ref{ass:local} and Lemma~\ref{lem:local-growth} only restrict the behavior of the function along the second block of coordinates of $z=(z_1,z_2)\in\R^{2N}$. This makes sense, because, for sequences that we consider, the first and second block depend on each other.
\end{REM}
\begin{REM}
Unlike Theorem~\ref{thm:abstr-global-conv}, the local convergence theorem (Theorem~\ref{thm:local-convergence-abstract}) does not require assumption \ref{ass:cont} explicitly. If Theorem~\ref{thm:abstr-global-conv} assumes the KL property at some $z^*$ (not the cluster point $\tilde z$ of \ref{ass:cont}), convergence to a point $\bar z$ in a neighborhood of $z^*$ with $\FF(\bar z) \leq \FF(z^*)$ can be shown. However, $\FF(\bar z) < \FF(z^*)$ might happen, which disproves $\FF$-attentive convergence of $z^\k\to\bar z$, thus $\bar z$ would not be a critical point. Assuming $\tilde z= z^*$ by \ref{ass:cont} assures the $\FF$-attentive convergence, and thus $\bar z$ is a critical point. Because of the local minimality of $z^*$ in Theorem~\ref{thm:local-convergence-abstract} $\FF(\bar z) < \FF(z^*)$ cannot occur, and therefore \ref{ass:cont} is implied.
\end{REM}
Before deriving the convergence rates, we apply Theorem~\ref{thm:local-convergence-abstract} and Lemma~\ref{lem:local-growth} to show a useful example of a feasibility problem.
\begin{EX}[semi-algebraic feasibility problem] \label{ex:local-convergence-feasibility}
  Let $S_1,\ldots,S_M\subset\R^\dimN$ be semi-algebraic sets such that $\bigcap_{i=1}^M S_i \neq \emptyset$  and let $\map \F{\R^\dimN}{\eR}$ be given by $\F(x) = \frac 12 \sum_{i=1}^M \dist(x,S_i)^2$. For a constant $c\geq0$, we consider the function $\FF(z) = \FF(z_1,z_2) = \F(z_1) + c \vnorm{z_1-z_2}^2$.  Suppose $z^*= (x^*,x^*)$ is a global minimizer of $\FF$, i.e., $x^* \in \bigcap_{i=1}^M S_i$. Then, for $z^0=(x^0,x^{-1})$ sufficiently close to $z^*$, any algorithm that satisfies \ref{ass:descent} and \ref{ass:error} and starts at $z^0$ generates a sequence that remains in a neighborhood of $z^*$,  has the finite length property, and converges to a point $\bar z =(\bar x, \bar x)$ with $\bar x \in \bigcap_{i=1}^M S_i$.
\end{EX}

Finally, we complement our local convergence result by the convergence rate estimates from \cite{JM16,LP17}. Assuming the objective function is semi-algebraic, in \cite[Theorems 2 and 4]{JM16} which build on \cite[Theorem 3.4]{FGP14}, a list of qualitative convergence rate estimates in terms of the KL-exponent is proved. For estimations on the KL-exponent, the interested reader is referred to \cite{LP17,BNPS16,LMP15,LMNP16}, which include estimations of the KL-exponent for convex polynomials, functions that can be expressed as the maximum of finitely many polynomials, functions that can be expressed as supremum of a collection of polynomials over a semi-algebraic compact set under suitable regularity assumptions, and relations to the Luo--Tseng error bound.
\begin{THM}[convergence rates] \label{thm:abstr-conv-rates}
Let $\seq[\k\in\N]{z^\k} = \seq[\k\in\N]{x^\k,x^{\km}}$ be a sequence that satisfies \ref{ass:descent}, \ref{ass:error}, and \ref{ass:cont} for a proper lsc function $\map{\FF}{\R^{2N}}\eR$ which has the KL property at the critical point $\tilde z=z^*$ specified in \ref{ass:cont}. Let $\theta$ be the KL-exponent of $\FF$. 
\begin{enumerate}
  \item If $\theta=1$, then $z^k$ converges to $z^*$ in a finite number of iterations.
  \item If $\frac 12 \leq \theta <1$, then $\FF(z^\k) \to \FF(z^*)$ and $x^\k\to x^*$ linearly.
  \item If $0<\theta < \frac 12$, then $\FF(z^\k)-\FF(z^*) \in O(\k^{\frac{1}{2\theta-1}})$ and $\vnorm{x^\k-x^*} \in O(\k^{\frac{\theta}{2\theta-1}})$.
\end{enumerate}
\end{THM}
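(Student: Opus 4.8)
The statement is a transfer result: the genuine convergence---finite length, $z^\k \to z^* = (x^*,x^*)$ with $\FF(z^\k)\to\FF(z^*)$---is already delivered by Theorem~\ref{thm:abstr-global-conv} under \ref{ass:descent}--\ref{ass:cont} and the KL property at $\tilde z = z^*$, so the only remaining work is to extract the \emph{rate} of this convergence from the KL exponent. The plan is to set $\Delta_\k := \vnorm{x^\k - x^\km}$ and $r_\k := \FF(z^\k) - \FF(z^*)$; since \ref{ass:descent} makes $\FF(z^\k)$ nonincreasing and Theorem~\ref{thm:abstr-global-conv} gives $\FF(z^\k)\to\FF(z^*)$, we have $r_\k \dto 0$ with $r_\k \geq 0$. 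Because $\FF$ is semi-algebraic, its desingularizing function has the form $\phi(s) = \frac{c}{\theta}s^\theta$, so $\phi'(s) = c\, s^{\theta-1}$.

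First I would assemble two master inequalities valid for all large $\k$ (once the iterates have entered the KL neighborhood). From \ref{ass:descent},
\begin{equation*}
  a\,\Delta_\k^2 \leq r_\k - r_{\kp}\,.
\end{equation*}
From \ref{ass:error} together with the KL inequality~\eqref{eq:KL-ineq} at $z^{\kp}$, using $\norm[-]{\partial\FF(z^{\kp})}\leq \vnorm{w^{\kp}}$,
\begin{equation*}
  \frac{1}{c}\,r_{\kp}^{1-\theta} = \frac{1}{\phi'(r_{\kp})} \leq \norm[-]{\partial\FF(z^{\kp})} \leq \frac b2\br{\Delta_\k + \Delta_{\kp}}\,.
\end{equation*}
These are exactly the hypotheses of the abstract rate machinery of \cite[Thm.\ 3.4]{FGP14} in the two-step form handled by \cite[Thms.\ 2 and 4]{JM16}, and I would invoke them to conclude. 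Concretely: for $\theta = 1$ the function $\phi'$ is the constant $c$, so the KL inequality forces $\norm[-]{\partial\FF(z^\k)}\geq 1/c$ as long as $r_\k>0$; but the finite-length property gives $\Delta_\k\to0$, whence the error bound forces $\norm[-]{\partial\FF(z^{\kp})}\to0$, a contradiction unless $r_\k = 0$ for some finite $\k$, after which \ref{ass:descent} freezes the iterates---giving termination in finitely many steps. For $\theta\in\ivalco{0,1}$ one combines concavity of $\phi$ (so that $\phi(r_\k)-\phi(r_{\kp})\geq \phi'(r_\k)(r_\k-r_{\kp})\geq a\,\phi'(r_\k)\Delta_\k^2$) with the slope bound to obtain, via a Young-type estimate $2\sqrt{\alpha\beta}\leq\alpha+\beta$, a summable recursion for the tails $\sum_{j\geq \k}\Delta_j$; feeding this back into $r_{\kp}^{1-\theta}\leq \tfrac{bc}{2}(\Delta_\k+\Delta_{\kp})$ produces a scalar recursion in $r_\k$ whose solution is linear for $\theta\in\ivalco{1/2,1}$ and of order $O(\k^{1/(2\theta-1)})$ for $\theta\in\ivaloo{0,1/2}$; the iterate rates $\vnorm{x^\k-x^*}\leq\sum_{j>\k}\Delta_j$ then follow by summation, giving linear convergence and $O(\k^{\theta/(2\theta-1)})$ respectively.

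The only genuine obstacle relative to the classical single-step \Loj rate analysis is the two-step coupling in \ref{ass:error}: the slope at $z^{\kp}$ is controlled by the sum $\Delta_\k + \Delta_{\kp}$ of \emph{two} consecutive increments rather than a single one. Consequently the derived recursion is a two-term recursion, and the main technical point is to check that the telescoping/summation argument extracting the rate still closes under this coupling. This is precisely the adaptation already carried out in \cite{JM16,LP17} for inertial (multi-step) schemes, so I would cite it for the detailed sequence estimates rather than reproduce them; the routine part is only the translation of \ref{ass:descent}--\ref{ass:error} into their abstract template, which the two displayed inequalities accomplish.
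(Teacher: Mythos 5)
Your proposal is correct and follows essentially the same route as the paper: both reduce \ref{ass:descent} and \ref{ass:error} plus the KL inequality with $\phi(s)=\frac{c}{\theta}s^\theta$ to a two-term recursion in $r_\k=\FF(z^\k)-\FF(z^*)$ (the paper squares and combines them into $\frac{b^2}{2a}\phi'(r_\k)^2(r_{\km}-r_{\kp})\geq 1$, you keep the two inequalities separate), both then delegate the detailed recursion analysis to \cite{JM16}, and both obtain the iterate rates from the tail bound $\vnorm{x^K-x^*}\leq\sum_{\k\geq K}\vnorm{x^\k-x^\km}\lesssim \phi(r_K)+\sqrt{r_K}$ coming from the finite-length estimate, comparing the two terms according to whether $\theta$ is above or below $\tfrac12$.
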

\begin{proof}

  Using Theorem~\ref{thm:abstr-global-conv} the sequence $\seq[\k\in\N]{z^\k}$ converges to $z^*$ and $\FF(z^\k) \to \FF(z^*)$ as $\k\to\infty$. W.l.o.g. we can assume that $\FF(z^\k) > \FF(z^*)$ for all $\k\in\N$. By convergence of $\seq[\k\in\N]{z^\k}$ and \ref{ass:descent}, there exists $\k_0$ such that the KL-inequality \eqref{eq:KL-ineq} with $f=\FF$ holds for all $\k\geq \k_0$. Let $U$, $\phi$, $\eta$ be the objects appearing in Definition~\ref{def:KL-property}. Now, using $(u+v)^2 \leq 2(u^2+v^2)$ for $u,v\in\R$ to bound the terms on the right hand side of \ref{ass:error} and substituting \ref{ass:descent} into the resulting terms, the squared KL-inequality \eqref{eq:KL-ineq} at index $\k$ yields 
  \[
    \frac{b^2}{2a} \big( \phi^\prime(\FF(z^\k) - \FF(z^*)) \big)^2 \big(\FF(z^\km) - \FF(z^\kp)\big) \geq 1 \,.
  \]
  As $\phi^\prime(s)=cs^{\theta-1}$ is non-increasing for $\theta\in\ivalcc{0,1}$, we have $\phi^\prime(\FF(z^\k) - \FF(z^*))\leq \phi^\prime(\FF(z^\kp) - \FF(z^*))$.  The remainder of the proof is identical to \cite{JM16} starting from \cite[Inequality~(7)]{JM16}, which yields the rates for $\seq[\k\in\N]{\FF(z^\k)}.$ \\

In the following, we prove the rates for $\seq[\k\in\N]{x^\k}$. We make use of an intermediate result from the proof of \cite[Lemma 3.5]{OCBP14} (cf. Lemma~\ref{lem:main-theorem-convergence}). The starting point is \cite[Inequality (6)]{OCBP14} restricted to terms with index $k\geq K$ for some $K\in\N$: 
\[
  \sum_{\k\geq K} \vnorm{x^\k - x^\km} \leq \frac 12 \vnorm{x^K-x^{K-1}} + \frac ba \phi(\FF(z^K)-\FF(z^*)) \,.
\]

The triangle inequality shows that the left hand side is an upper bound for $\vnorm{x^K-x^*}$. Using \ref{ass:descent} to bound the right side of the preceding inequality yields:

\[
  \vnorm{x^K-x^*} \leq \sum_{\k\geq K} \vnorm{x^\k - x^\km} \leq c^{\prime\prime} \left( \phi(\FF(z^K)-\FF(z^*)) + \sqrt{\FF(z^K)-\FF(z^*)} \right) 
\]
for some constant $c^{\prime\prime}>0$. If the KL-exponent is $\theta\in \ivalco{\frac 12,1}$, for $\FF(z^K)-\FF(z^*)<1$,  the second term upper-bounds the first one, and $\FF(z^K)\to \FF(z^*)$ is linear. For $\theta\in \ivaloo{0,\frac 12}$ convergence is dominated by the first term, hence $\vnorm{x^K-x^*} \in O(\phi(\FF(z^K)-\FF(z^*)))$, which concludes the proof.
\end{proof}


\section{Local and Global Convergence of iPiano} \label{sec:review-of-iPiano}

In this section, we briefly review the method iPiano and verify that the abstract convergence results from Section~\ref{sec:abstract-convergence} hold for this algorithm.

iPiano applies to structured non-smooth and non-convex optimization problems with a proper lower semi-continuous (lsc) extended-valued function $\map{h}{\R^N}{\eR}$, $N\geq 1$:
\begin{equation}\label{eq:ipiano-class}
\min_{x \in \R^N} \; h(x)\,,\qquad h(x)= f(x) + g(x) 
\end{equation}
that satisfies the following assumption.
\begin{ASS} \label{ass:ipiano}
For $U\subset\R^N$, the following properties hold:
\begin{itemize}
  \item The function $\map f{U}{\R}$ is assumed to be $C^1$-smooth (possibly non-convex) with $L$-Lipschitz continuous gradient on $\dom g\cap U$, $L>0$. 
  \item The function $\map g {U}{\eR}$ is proper, lsc, possibly non-smooth and non-convex, simple and prox-bounded. 
  \item The function $h$ restricted to $U$ is bounded from below by some value $\underline h > -\infty$ and coercive, i.e., $\vnorm{x}\to \infty$ with $x\in U$ implies that $h(x)\to \infty$.
\end{itemize}
\end{ASS}
\begin{REM}
As we will use Assumption~\ref{ass:ipiano} either with $U=\R^N$ or $U=\ball{r^\prime}(x^*)$ for some $r^\prime>0$, the coercivity assumption reduces either to the usual definition ($U=\R^N$) or is empty (since $\ball{r^\prime}(x^*)$ is bounded). The coercivity property could be replaced by the assumption that the sequence that is generated by the algorithm is bounded.
\end{REM}
\begin{REM}
Simple refers to the fact that the associated proximal map can be solved efficiently for the global optimum. 
\end{REM}
iPiano is outlined in Algorithm~\ref{alg:ipiano-gen}. For $g=0$, iPiano coincides with the Heavy-ball method (inertial gradient descent method or gradient descent with momentum). 

In \cite{Ochs15}, functions $g$ that are semi-convex received special attention. The resulting step size restrictions for semi-convex functions $g$ are similar to those of convex functions. A function is said to be semi-convex with modulus $m\in \R$, if $m$ is the largest value such that $g(x) - \frac m2 \vnorm{x}^2$ is convex. For convex functions, $m=0$ holds, and for strongly convex functions $m>0$. We assume $m< L$.
  According to \cite[Theorem 10.33]{Rock98}, saying a function $g$ is (locally) semi-convex on an open set $V\subset\dom g$ is the same as saying $g$ is lower-$\mathcal C^2$ on $V$. Nevertheless, the function $g$ does not need to be semi-convex. This property is just used to improve the bounds on the step size parameters. 
\begin{REM}
  For simplicity, we describe the constant step size version of iPiano. However, all results in this paper are also valid for the backtracking line-search version of iPiano.
\end{REM}
\begin{figure}[t]
\centering
\setlength{\fboxsep}{5pt}%
\setlength{\fboxrule}{1.5pt}%
\fbox{
\begin{minipage}{0.95\linewidth}
\begin{ALG} iPiano \ \label{alg:ipiano-gen}
\begin{itemize}
\item \textbf{Optimization problem:} 
\[
  \text{\eqref{eq:ipiano-class} with Assumption~\ref{ass:ipiano} for}\ 
 \begin{cases} 
        U=\R^N \\
        U=\ball {r^\prime} (x^*)\ \text{ for a local minimizer $x^*$ and $r^\prime>0$.}
     \end{cases}
\]
\item \textbf{Initialization}: Choose a starting point $x^0 \in \dom h\cap U$ and set $x^{-1}= x^0$.
\item \textbf{Iterations $(\k \geq 0)$}:  Update:
  \begin{equation}\label{eq:ipiano-gen-up}
    \begin{split}
    y^{\k} =&\ x^{\k} + \beta(x^{\k} - x^{\km})  \\
    x^{\kp} \in&\ \arg\min_{x\in\R^N}\ g(x) + \scal{\nabla f(x^\k)}{x-x^\k} + \frac{1}{2\alpha} \vnorm{x - y^\k}^2  \,.
    \end{split}
  \end{equation}
\item \textbf{Parameter setting}: See Table~\ref{tab:iPiano-param-conv}.
\end{itemize}
\end{ALG}
\end{minipage}
}
\end{figure}

\begin{table}
\begin{center}
\begin{tabular}{|c||c|c|c|c|}
\hline
  Method & $f$ & $g$ & $\alpha$ & $\beta$ \\
\hline\hline
  Gradient Descent & 
  $f\in \mathcal C^{1+}$ & $g\equiv 0$ & $\alpha \in \ivaloo{0,\frac 2L}$ & $\beta=0$  \\
\hline 
  Heavy-ball method & 
  $f\in \mathcal C^{1+}$ & $g\equiv 0$ & $\alpha \in \ivaloo{0,\frac {2(1-\beta)}L}$ & $\beta\in \ivalco{0,1}$  \\
\hline
  PPA  & 
  $f\equiv 0$ & $g$ convex & $\alpha > 0$ & $\beta=0$  \\
\hline
  FBS & 
  $f\in \mathcal C^{1+}$ & $g$ convex & $\alpha \in \ivaloo{0,\frac 2L}$ & $\beta=0$  \\
\hline 
  FBS (non-convex) & 
  $f\in \mathcal C^{1+}$ & $g$ non-convex & $\alpha \in \ivaloo{0,\frac 1L}$ & $\beta=0$  \\
\hline 
  iPiano & 
  $f\in \mathcal C^{1+}$ & $g$ convex & $\alpha \in \ivaloo{0,\frac {2(1-\beta)}L}$ & $\beta\in \ivalco{0,1}$  \\
\hline 
  iPiano & 
  $f\in \mathcal C^{1+}$ & $g$ non-convex & $\alpha \in \ivaloo{0,\frac {(1-2\beta)}L}$ & $\beta\in \ivalco{0,\frac 12}$  \\
\hline 
  iPiano & 
  $f\in \mathcal C^{1+}$ & $g$ $m$-semi-convex & $\alpha \in \ivaloo{0,\frac {2(1-\beta)}{L-m}}$ & $\beta\in \ivalco{0,1}$  \\
  \hline
\end{tabular}
\end{center}
\caption{\label{tab:iPiano-param-conv}Convergence of iPiano as stated in Corollaries~\ref{cor:convergence-whole-seq}, \ref{cor:local-convergence-iPiano} and~\ref{cor:ipiano-conv-rates} is guaranteed for the parameter settings listed in this table (for $g$ convex, see \cite[Algorithm 2]{OCBP14}, otherwise see \cite[Algorithm 3]{Ochs15}). Note that for local convergence, also the required properties of $f$ and $g$ are required to hold only locally. iPiano has several well-known special cases, such as the gradient descent method, Heavy-ball method, proximal point algorithm (PPA), and forward--backward splitting (FBS). $\mathcal C^{1+}$ denotes the class of functions whose gradient is strictly continuous (Lipschitz continuous).}
\end{table}

The following convergence results hold for iPiano.
\begin{COR}[global convergence of iPiano {\cite[Theorem 6.6]{Ochs15}}] \label{cor:convergence-whole-seq}
Let $\seq[\k\in\N]{x^\k}$ be generated by Algorithm~\ref{alg:ipiano-gen} with $U=\R^N$. Then, the sequence $\seq[\k\in\N]{z^\k}$ with $z^\k=(x^\k, x^\km)$ satisfies \ref{ass:descent}, \ref{ass:error}, \ref{ass:cont} for the function (for some $\ipianodelta >0$)
\begin{equation} \label{eq:cor:convergence-whole-seq:H-fun}
  \map{H_\ipianodelta}{\R^{2N}}{\R\cup\{\infty\}}\,,\quad (x,y)\mapsto h(x) + \ipianodelta \vnorm{x-y}^2 \,.
\end{equation}

Moreover, if $H_\ipianodelta(x,y)$ has the \KL property at a cluster point $z^*=(x^*,x^*)$, then the sequence $\seq[\k\in\N]{x^\k}$ has the finite length property, $x^\k\to x^*$ as $\k\to\infty$, and $z^*$ is a critical point of $H_\ipianodelta$, hence $x^*$ is a critical point of $h$.
\end{COR}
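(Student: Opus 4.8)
The plan is to verify that the iPiano iterates satisfy the three abstract conditions \ref{ass:descent}, \ref{ass:error}, \ref{ass:cont} for the Lyapunov function $\FF = H_\ipianodelta$ from \eqref{eq:cor:convergence-whole-seq:H-fun}, and then to invoke Theorem~\ref{thm:abstr-global-conv}. Throughout I abbreviate $\Delta^\k := x^\k - x^\km$ and $s^\k := x^\kp - x^\k$, so that $H_\ipianodelta(z^\k) = h(x^\k) + \ipianodelta\vnorm{\Delta^\k}^2$ and the inertial extrapolation in \eqref{eq:ipiano-gen-up} satisfies $x^\kp - y^\k = s^\k - \beta\Delta^\k$.

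For \ref{ass:descent} I would combine two ingredients. First, the descent lemma applied to $f$ (whose gradient is $L$-Lipschitz on $\dom g$ by Assumption~\ref{ass:ipiano}) gives $f(x^\kp) \leq f(x^\k) + \scal{\nabla f(x^\k)}{s^\k} + \frac L2 \vnorm{s^\k}^2$. Second, since $x^\kp$ is a \emph{global} minimizer of the subproblem in \eqref{eq:ipiano-gen-up}, inserting the feasible point $x^\k$ into that objective (a comparison valid regardless of the convexity of $g$) yields $g(x^\kp) + \scal{\nabla f(x^\k)}{s^\k} \leq g(x^\k) + \frac1{2\alpha}\vnorm{\beta\Delta^\k}^2 - \frac1{2\alpha}\vnorm{s^\k - \beta\Delta^\k}^2$. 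Adding these two estimates, the $f$-gradient cross terms cancel and, after expanding the last square and bounding the remaining term $\scal{s^\k}{\Delta^\k}$ by Young's inequality, I obtain a one-step inequality of the form $h(x^\kp) \leq h(x^\k) - c_1\vnorm{s^\k}^2 + c_2\vnorm{\Delta^\k}^2$. Adding $\ipianodelta\vnorm{s^\k}^2$ to both sides to complete $H_\ipianodelta(z^\kp)$ on the left, the descent condition $H_\ipianodelta(z^\kp) + a\vnorm{\Delta^\k}^2 \leq H_\ipianodelta(z^\k)$ follows provided $\ipianodelta$ is chosen so that the coefficient of $\vnorm{s^\k}^2$ stays nonnegative while $a := \ipianodelta - c_2 > 0$; this is exactly what the parameter ranges in Table~\ref{tab:iPiano-param-conv} guarantee (with semi-convexity of $g$ used, when available, to sharpen the comparison inequality and relax the bounds on $\alpha,\beta$).

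For \ref{ass:error} I would read off the first-order optimality of the subproblem, $0 \in \partial g(x^\kp) + \nabla f(x^\k) + \frac1\alpha(x^\kp - y^\k)$, so that $-\nabla f(x^\k) - \frac1\alpha(s^\k - \beta\Delta^\k) \in \partial g(x^\kp)$. Assembling the two blocks of $\partial H_\ipianodelta(z^\kp)$ --- namely $\nabla f(x^\kp) + \partial g(x^\kp) + 2\ipianodelta s^\k$ in the first coordinate and $-2\ipianodelta s^\k$ in the second --- and substituting this subgradient gives an explicit element $w^\kp \in \partial H_\ipianodelta(z^\kp)$ whose first coordinate equals $\nabla f(x^\kp) - \nabla f(x^\k) - \frac1\alpha(s^\k - \beta\Delta^\k) + 2\ipianodelta s^\k$. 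Bounding $\vnorm{\nabla f(x^\kp) - \nabla f(x^\k)} \leq L\vnorm{s^\k}$ by $L$-Lipschitzness, the norm of $w^\kp$ is controlled linearly by $\vnorm{s^\k}$ and $\vnorm{\Delta^\k}$, which is precisely the required bound $\frac b2(\vnorm{\Delta^\k} + \vnorm{s^\k})$ for a suitable $b$.

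Condition \ref{ass:cont} comes from boundedness: \ref{ass:descent} makes $H_\ipianodelta(z^\k)$ nonincreasing and, together with the lower bound and coercivity of $h$ in Assumption~\ref{ass:ipiano}, keeps $(x^\k)$ bounded; summability of $\vnorm{\Delta^\k}^2$ then bounds $(z^\k)$, so a subsequence converges to some $\tilde z$, and the attentive convergence of the function values follows from lower semicontinuity of $H_\ipianodelta$ combined with the minimizing property of the proximal step. Having established \ref{ass:descent}--\ref{ass:cont}, Theorem~\ref{thm:abstr-global-conv} (under the stated KL hypothesis at $z^*$) yields the finite length property, $x^\k \to x^*$, and that $z^*=(x^*,x^*)$ is critical for $H_\ipianodelta$. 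Finally, because the second block of $\partial H_\ipianodelta(x,y)$ is $-2\ipianodelta(x-y)$, which vanishes at $(x^*,x^*)$, the inclusion $0 \in \partial H_\ipianodelta(z^*)$ collapses to $0 \in \partial h(x^*)$, so $x^*$ is a critical point of $h$. The delicate part will be the constant bookkeeping in \ref{ass:descent}: one must place $\ipianodelta$ in the narrow window that simultaneously keeps the $\vnorm{s^\k}^2$-coefficient nonnegative and forces $a>0$, and this window is nonempty only on the parameter ranges of Table~\ref{tab:iPiano-param-conv}.
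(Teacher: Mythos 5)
Your proposal is correct and follows essentially the same route as the proof the paper relies on: the corollary is not proved in the text but cited from \cite[Theorem 6.6]{Ochs15}, whose argument is exactly your combination of the descent lemma for $f$ with the global optimality of the proximal subproblem to get \ref{ass:descent} for $H_\ipianodelta$, the Fermat rule of the subproblem to get \ref{ass:error}, coercivity plus monotone decrease for \ref{ass:cont}, and then Theorem~\ref{thm:abstr-global-conv}. The concluding observation that the second block of $\partial H_\ipianodelta$ vanishes at $(x^*,x^*)$, so criticality of $H_\ipianodelta$ collapses to criticality of $h$, is also the intended one.
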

\begin{COR}[local convergence of iPiano] \label{cor:local-convergence-iPiano} 
Let $\seq[\k\in\N]{x^\k}$ be generated by Algorithm~\ref{alg:ipiano-gen} with $U=\ball{r^\prime}(x^*)$ for some $r^\prime>0$, where $x^*$ is a local (or global) minimizer of $h$. Then $z^*=(x^*,x^*)$ is a local (or global) minimizer of $H_\ipianodelta$ (defined in \eqref{eq:cor:convergence-whole-seq:H-fun}). Suppose \ref{ass:local} holds at $z^*$ and $H_\ipianodelta$ has the KL property at $z^*$. 

Then, for any $r>0$ (in particular for $r=r^\prime$), there exist $u\in\ivaloo{0,r}$ and $\mu >0$ such that the conditions 
  \[
  x^0 \in \ball u (x^*)\,,\qquad h(x^*) < h(x^0) < h(x^*) + \mu \,,
  \]
imply that the sequence $\seq[\k\in\N]{x^\k}$ has the finite length property and remains in $\ball r(x^*)$ and converges to some $\bar x \in \ball r(x^*)$, a critical point of $h$ with $h(\bar x) = h(x^*)$. For $r$ sufficiently small, $\bar z$ is a local minimizer of $h$.
\end{COR}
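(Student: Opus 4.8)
The plan is to deduce this corollary from the abstract local convergence theorem (\cref{thm:local-convergence-abstract}) applied to $\FF = H_\ipianodelta$ and the lifted sequence $z^\k = (x^\k, x^\km)$, in the same way that the global statement (\cref{cor:convergence-whole-seq}) is obtained from the abstract global theorem. Three things have to be assembled: that $z^* = (x^*,x^*)$ is a local (resp.\ global) minimizer of $H_\ipianodelta$; that the iPiano iterates satisfy \ref{ass:descent} and \ref{ass:error} for $H_\ipianodelta$ while they remain in $U = \ball{r^\prime}(x^*)$; and that the initial data of Algorithm~\ref{alg:ipiano-gen} translate into condition \eqref{eq:cond-local-min-thm}.

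For the minimizer claim I would use that $H_\ipianodelta(x,y) = h(x) + \ipianodelta\vnorm{x-y}^2 \geq h(x)$, with equality on the diagonal. Since $x^*$ is a local minimizer of $h$, for $(x,y)$ near $z^*$ we get $H_\ipianodelta(x,y) \geq h(x) \geq h(x^*) = H_\ipianodelta(z^*)$, so $z^*$ is a local (resp.\ global) minimizer of $H_\ipianodelta$ and $H_\ipianodelta(z^*) = h(x^*)$. The same structure gives the final translation of critical points: on the diagonal the penalty term contributes nothing to $\partial H_\ipianodelta$, so $0 \in \partial H_\ipianodelta(\bar x,\bar x)$ holds if and only if $0 \in \partial h(\bar x)$, turning a critical point of $H_\ipianodelta$ into a critical point of $h$.

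Next I would note that \ref{ass:descent} and \ref{ass:error} for $H_\ipianodelta$ are exactly the estimates established in \cite{Ochs15} behind \cref{cor:convergence-whole-seq}; the only role of \cref{ass:ipiano} there is through the $L$-Lipschitz continuity of $\nabla f$ and the prox-properties of $g$, which are now assumed only on $U$. Hence these conditions are valid at a step as soon as the iterates it involves lie in $U$. I would then invoke \cref{thm:local-convergence-abstract} with $\FF = H_\ipianodelta$ and any $r \leq r^\prime$: since every $z = (z_1,z_2) \in \ball{r}(z^*)$ satisfies $\vnorm{z_1 - x^*} \leq r \leq r^\prime$ and $\vnorm{z_2 - x^*} \leq r \leq r^\prime$, both blocks lie in $U$. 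To close, I would use that Algorithm~\ref{alg:ipiano-gen} sets $x^{-1} = x^0$, so $z^0 = (x^0,x^0)$ with $\vnorm{z^0 - z^*} = \sqrt{2}\,\vnorm{x^0 - x^*}$ and $H_\ipianodelta(z^0) = h(x^0)$; the numbers $u,\mu$ for $h$ are then obtained from those supplied by \cref{thm:local-convergence-abstract} after shrinking $u$ by the factor $1/\sqrt2$. The conclusion $H_\ipianodelta(\bar z) = H_\ipianodelta(z^*)$ yields $h(\bar x) = h(x^*)$, and local minimality of $\bar x$ for small $r$ follows from the corresponding statement of \cref{thm:local-convergence-abstract}.

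The step I expect to be the main obstacle is the apparent circularity between the two preceding paragraphs: \ref{ass:descent} and \ref{ass:error} need the iterates to be in $U$, yet confinement to $U$ is part of what \cref{thm:local-convergence-abstract} delivers. I would resolve this by appealing to the inductive structure of that theorem's proof, which at stage $\k$ only uses \ref{ass:descent} and \ref{ass:error} at steps whose iterates have already been shown to lie in $\ball{\rho}(z^*) \subset U \times U$; choosing $r \leq r^\prime$ makes the descent/error induction and the confinement induction feed into one another consistently. A secondary point to verify carefully is that the minimizer $x^{\kp}$ produced by the update \eqref{eq:ipiano-gen-up} also lands in $U$, so that the segment from $x^\k$ to $x^{\kp}$ used in the descent lemma stays inside the convex set $U$; this is guaranteed once $z^{\kp} \in \ball{\rho}(z^*)$ at that stage.
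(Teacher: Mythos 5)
Your proposal is correct and follows essentially the same route as the paper, whose proof consists only of citing Corollary~\ref{cor:convergence-whole-seq} for the verification of \ref{ass:descent}--\ref{ass:cont} with $H_\ipianodelta$ and then stating that Theorem~\ref{thm:local-convergence-abstract} applies. The additional details you supply (the diagonal identity $H_\ipianodelta(x,x)=h(x)$, the $\sqrt2$ rescaling of $u$, and the resolution of the apparent circularity between confinement to $U$ and the validity of \ref{ass:descent}--\ref{ass:error} via the induction inside Theorem~\ref{thm:local-convergence-abstract}) are exactly the points the paper compresses into the word ``obviously,'' and your treatment of them is sound.
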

\begin{proof}
  Corollary~\ref{cor:convergence-whole-seq} shows that Algorithm~\ref{alg:ipiano-gen} generates a sequence that satisfies \ref{ass:descent}, \ref{ass:error}, \ref{ass:cont} with $H_\ipianodelta$. Therefore, obviously, Theorem~\ref{thm:local-convergence-abstract} can be applied.
\end{proof}
\begin{COR}[convergence rates for iPiano] \label{cor:ipiano-conv-rates}
Let $\seq[\k\in\N]{x^\k}$ be generated by Algorithm~\ref{alg:ipiano-gen} and set $z^\k:=(x^\k,x^\km)$. If $H_\ipianodelta$, defined in \eqref{eq:cor:convergence-whole-seq:H-fun}, has the KL property at $z^*=(x^*,x^*)$ specified in \ref{ass:cont} with KL-exponent $\theta$, then the following rates of convergence hold for some $C>0$:
\begin{enumerate}
  \item If $\theta=1$, then $x^k$ converges to $x^*$ in a finite number of iterations.
  \item If $\frac 12 \leq \theta <1$, then $h(x^\k) \to h(x^*)$ and $x^\k\to x^*$ linearly.
  \item If $0<\theta < \frac 12$, then $h(x^\k)-h(x^*) \in C(\k^{\frac{1}{2\theta-1}})$ and $\vnorm{x^\k-x^*} \in O(\k^{\frac{\theta}{2\theta-1}})$.
\end{enumerate}
\end{COR}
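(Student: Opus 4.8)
The plan is to recognize that \cref{cor:ipiano-conv-rates} is essentially a direct consequence of the abstract rate result \cref{thm:abstr-conv-rates}, applied to the Lyapunov function $H_\ipianodelta$ associated with iPiano, once the translation between $H_\ipianodelta$ and the original objective $h$ is made. First I would invoke \cref{cor:convergence-whole-seq}: it guarantees that the lifted sequence $z^\k = (x^\k, x^\km)$ generated by \cref{alg:ipiano-gen} satisfies the three abstract conditions \ref{ass:descent}, \ref{ass:error}, \ref{ass:cont} for $\FF = H_\ipianodelta$. Combined with the hypothesis that $H_\ipianodelta$ has the KL property at $z^* = (x^*,x^*)$ with KL-exponent $\theta$, the hypotheses of \cref{thm:abstr-conv-rates} are met.

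Applying \cref{thm:abstr-conv-rates} then yields, in each of the three regimes of $\theta$, the asserted rate for $\vnorm{x^\k - x^*}$ and the corresponding rate for $H_\ipianodelta(z^\k) - H_\ipianodelta(z^*)$. The rate on $\vnorm{x^\k - x^*}$ transfers to the corollary unchanged, since the iterate sequence is literally the same object. It therefore only remains to convert the rate on $H_\ipianodelta(z^\k) - H_\ipianodelta(z^*)$ into the claimed rate on $h(x^\k) - h(x^*)$.

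This conversion is the one nontrivial step, and I would handle it using the decomposition $H_\ipianodelta(z^\k) - H_\ipianodelta(z^*) = \br{h(x^\k) - h(x^*)} + \ipianodelta \vnorm{x^\k - x^\km}^2$, valid because $z^* = (x^*, x^*)$ makes the quadratic term vanish at $z^*$. The key observation is that the inertial term $\vnorm{x^\k - x^\km}^2$ is controlled by the very same rate: from the descent inequality \ref{ass:descent} one has $a\vnorm{x^\k-x^\km}^2 \leq \FF(z^\k) - \FF(z^{\kp}) \leq \FF(z^\k) - \FF(z^*)$, where the last inequality uses that $\seq[\k\in\N]{\FF(z^\k)}$ decreases to $\FF(z^*)$ (secured via \cref{thm:abstr-global-conv}, which in particular gives $\FF(z^{\kp}) \geq \FF(z^*)$). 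Hence $\vnorm{x^\k - x^\km}^2$ inherits the rate of $H_\ipianodelta(z^\k) - H_\ipianodelta(z^*)$, and rearranging the decomposition shows that $h(x^\k) - h(x^*)$ obeys the same rate in all three cases: finite termination in case (i), where $z^\k = z^*$ eventually forces $x^\k = x^*$; linear convergence in case (ii); and $O(\k^{1/(2\theta-1)})$ in case (iii).

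I expect no serious obstacle here, as the result is a corollary by design. The only point requiring care is the rate transfer from $H_\ipianodelta$ to $h$: one must check that the quadratic correction term does not decay more slowly than the function-value gap, which is precisely what \ref{ass:descent} rules out. With that bound in hand, the combination of \cref{cor:convergence-whole-seq} and \cref{thm:abstr-conv-rates} closes the argument.
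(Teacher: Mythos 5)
Your proposal is correct and follows the paper's own route: invoke Corollary~\ref{cor:convergence-whole-seq} to verify \ref{ass:descent}, \ref{ass:error}, \ref{ass:cont} for $H_\ipianodelta$ and then apply Theorem~\ref{thm:abstr-conv-rates}. The only difference lies in transferring the rate from $H_\ipianodelta$ to $h$: the paper simply cites $h(x^\k)\leq H_\ipianodelta(x^\k,x^\km)$ together with $H_\ipianodelta(x^*,x^*)=h(x^*)$, whereas you additionally bound the inertial term $\ipianodelta\vnorm{x^\k-x^\km}^2$ via \ref{ass:descent} --- a slightly more careful version of the same observation, which also handles the case $h(x^\k)<h(x^*)$ that the paper's one-line inequality glosses over.
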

\begin{proof}
  Corollary~\ref{cor:convergence-whole-seq} shows that Algorithm~\ref{alg:ipiano-gen} generates a sequence that satisfies \ref{ass:descent}, \ref{ass:error}, \ref{ass:cont} for $H_\ipianodelta$. Therefore, the statement follows from Theorem~\ref{thm:abstr-conv-rates} and the facts that $H_\ipianodelta(x^*,x^*)=h(x^*)$ and $h(x^\k) \leq H_\ipianodelta(x^\k,x^\km)$.
\end{proof}
\begin{REM} \label{rem:KL-for-iPiano-Lyapunov}
  In \cite[Theorem 3.6]{LP17}, Li and Pong show that, if $h$ has the KL-exponent $\theta\in \ivaloc{0,\frac 12}$ at $x^*$, then $H_\ipianodelta$ has the same KL-exponent at $z^*=(x^*,x^*)$.
\end{REM}

\section{Inertial Averaged/Alternating Minimization} \label{sec:alt-avg-prox-min}

In this section, we transfer the convergence result developed for iPiano in Section~\ref{sec:review-of-iPiano} to various non-convex settings (Section~\ref{subsec:Heavy-ball-on-Moreau}, \ref{subsec:Heavy-ball-on-sum-Moreaus} and \ref{subsec:ipiano-on-Moreau}). This yields inertial algorithms for non-convex problems that are known from the convex setting as averaged or alternating proximal minimization (or projection) methods. Key for the generalization to the non-convex and inertial setting are an explicit formula for the gradient of the Moreau envelope of a prox-regular function (Proposition~\ref{prop:lipschitz-Moreau-envelope}), which is well-known for convex functions (Proposition~\ref{prop:lipschitz-Moreau-envelope-convex}), and the local convergence result in Theorem~\ref{thm:local-convergence-abstract}. For completeness, we state the formula in the convex setting, before we devote ourselves to the prox-regular setting. 
\begin{PROP}[{\cite[Proposition 12.29]{BC11}}]\label{prop:lipschitz-Moreau-envelope-convex}
  Let $\map{f}{\R^N}{\eR}$ be a proper lower semi-continuous (lsc) convex function and $\lambda>0$. Then $\menv \lambda f$ is continuously differentiable and its gradient 
\begin{equation} \label{eq:Moreau-grad}
  \nabla \menv\lambda f(x) =\frac{1}{\lambda} (x - \prox\lambda f(x)) \,,
\end{equation}
is $\lambda^{-1}$-Lipschitz continuous.
\end{PROP}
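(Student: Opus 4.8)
The plan is to exploit the strong convexity built into the proximal objective, compute the subdifferential of the (necessarily convex) Moreau envelope explicitly, and then read off differentiability, the gradient formula, and the Lipschitz bound in turn. First I would fix $\lambda>0$ and $x\in\R^N$ and observe that the map $w\mapsto f(w)+\frac{1}{2\lambda}\vnorm{w-x}^2$ is proper, lsc, and $\tfrac1\lambda$-strongly convex, hence coercive with a \emph{unique} minimizer. Thus $\prox\lambda f(x)$ is a singleton and $\menv\lambda f(x)$ is finite for every $x$. Writing $p:=\prox\lambda f(x)$, Fermat's rule together with the sum rule for subdifferentials (applicable since $\frac{1}{2\lambda}\vnorm{\cdot-x}^2$ is finite and continuous) yields the optimality condition $\tfrac1\lambda(x-p)\in\partial f(p)$; equivalently $\prox\lambda f=(\opid+\lambda\partial f)^{-1}$.

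Next comes the gradient formula. Since $\menv\lambda f$ is an infimal convolution of the convex functions $f$ and $\frac{1}{2\lambda}\vnorm\cdot^2$ it is itself convex, and by the previous paragraph it is finite, hence continuous and subdifferentiable, on all of $\R^N$. Plugging the (suboptimal) choice $w=p$ into the infimum defining $\menv\lambda f(y)$ and expanding $\vnorm{p-y}^2$ gives the upper bound
\[
  \menv\lambda f(y)\le \menv\lambda f(x)+\scal{\tfrac1\lambda(x-p)}{y-x}+\frac{1}{2\lambda}\vnorm{y-x}^2 \Forall y\in\R^N\,.
\]
On the other hand, any $s\in\partial\menv\lambda f(x)$ satisfies $\scal{s}{y-x}\le\menv\lambda f(y)-\menv\lambda f(x)$ by convexity. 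Combining the two inequalities gives $\scal{s-g}{y-x}\le\frac{1}{2\lambda}\vnorm{y-x}^2$ with $g:=\tfrac1\lambda(x-p)$; testing with $y-x=t(s-g)$ and letting $t\dto 0$ forces $s=g$. Hence $\partial\menv\lambda f(x)=\{g\}$ is a singleton, and since a convex function with single-valued subdifferential at a point is differentiable there, $\menv\lambda f$ is differentiable with $\nabla\menv\lambda f(x)=\tfrac1\lambda(x-\prox\lambda f(x))$.

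For the Lipschitz estimate I would invoke the monotonicity of $\partial f$. Writing $p_i=\prox\lambda f(x_i)$ and using $\tfrac1\lambda(x_i-p_i)\in\partial f(p_i)$, monotonicity yields the firm-nonexpansiveness inequality $\scal{x_1-x_2}{p_1-p_2}\ge\vnorm{p_1-p_2}^2$. Setting $u=x_1-x_2$ and $v=p_1-p_2$, this gives $\vnorm{u-v}^2=\vnorm u^2-2\scal uv+\vnorm v^2\le\vnorm u^2-\vnorm v^2\le\vnorm u^2$, so $\vnorm{(x_1-p_1)-(x_2-p_2)}\le\vnorm{x_1-x_2}$ and therefore $\vnorm{\nabla\menv\lambda f(x_1)-\nabla\menv\lambda f(x_2)}\le\lambda^{-1}\vnorm{x_1-x_2}$. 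Continuity of the gradient is then immediate, so $\menv\lambda f\in C^1$ and the proof is complete.

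The main obstacle is the differentiability step: one must ensure $\partial\menv\lambda f(x)$ is \emph{nonempty} (not merely at most a singleton), which I would obtain from finiteness and continuity of the convex envelope, and then invoke the convex-analysis fact that single-valuedness of the subdifferential forces differentiability. The upper-bound-plus-subgradient squeeze is precisely what makes the identification of the gradient clean, and the monotonicity argument then hands over the Lipschitz constant essentially for free.
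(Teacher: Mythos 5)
The paper offers no proof of this proposition, simply citing \cite[Proposition 12.29]{BC11}, and your argument is a correct, self-contained version of exactly that standard proof: unique minimizer from strong convexity, identification of the gradient via the quadratic upper bound combined with the subgradient inequality, and the $\lambda^{-1}$-Lipschitz bound from firm nonexpansiveness of $\prox\lambda f$. All steps check out, including the care you take that $\partial \menv\lambda f(x)$ is nonempty (finiteness of the envelope on all of $\R^N$) before invoking the fact that a singleton subdifferential forces differentiability.
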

\begin{PROP}\label{prop:lipschitz-Moreau-envelope} 
  Suppose that $\map f{\R^\dimN}{\eR}$ is prox-regular at $\bar x$ for $\bar v =0$, and that $f$ is prox-bounded. Then for all $\lambda >0$ sufficiently small there is a neighborhood of $\bar x$ on which 
  \begin{enumerate}
  \item\label{prop:lipschitz-Moreau-envelope-A} $\prox \lambda f$ is monotone, single-valued and Lipschitz continuous and $\prox\lambda f (\bar x) = \bar x$.
  \item\label{prop:lipschitz-Moreau-envelope-B} $\menv \lambda f$ is differentiable with $\nabla (\menv \lambda f )(\bar x) = 0$, in fact $\nabla (\menv \lambda f )$ is strictly continuous with
  \begin{equation} \label{eq:prox-reg-Moreau-formula}
      \nabla \menv \lambda f  = \lambda^{-1} (I - \prox \lambda f) = (\lambda I + \floc^{-1})^{-1}
  \end{equation}
  for an $f$-attentive localization $\floc$ of $\partial f$ at $(\bar x,0)$, where $I$ denotes the identity mapping. Indeed, this localization can be chosen so that the set $U_\lambda := \rge(I+\lambda\floc)$ serves for all $\lambda >0$ sufficiently small as a neighborhood of $\bar x$ on which these properties hold. 
  \item\label{prop:lipschitz-Moreau-envelope-C} There is a neighborhood of $\bar x$ on which for small enough $\lambda$ the local Lipschitz constant of $\nabla \menv \lambda f$ is $\lambda^{-1}$. 
  If $\lambda_0$ is the modulus of prox-regularity at $\bar x$, then $\lambda \in \ivaloo{0, \lambda_0/2}$ is a sufficient condition.
  \item\label{prop:lipschitz-Moreau-envelope-D} Any point $\tilde x\in U_\lambda$ with $\nabla \menv \lambda f (\tilde x) = 0$ is a fixed point of $\prox\lambda f$ and a critical point of $f$.
  \end{enumerate}
\end{PROP}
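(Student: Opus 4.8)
The plan is to push everything through the resolvent of an $f$-attentive localization of $\partial f$ and then to borrow the convex result in Proposition~\ref{prop:lipschitz-Moreau-envelope-convex}. First I would extract the localization: by prox-regularity at $\bar x$ for $\bar v=0$ with modulus $\lambda_0$ there is an $f$-attentive localization $\floc$ of $\partial f$ at $(\bar x,0)$ whose defining inequality, written at two graph points and added, gives
\[
  \scal{v-v'}{x-x'} \ge -\tfrac{1}{\lambda_0}\vnorm{x-x'}^2 \quad\text{for } v\in\floc(x),\ v'\in\floc(x'),
\]
i.e.\ $\floc+\lambda_0^{-1}I$ is monotone (the monotone-up-to-identity characterization of \cite{PR96}). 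Next I would identify the proximal map with a resolvent: the optimality condition for $w\in\prox\lambda f(x)$ is $x\in w+\lambda\,\partial f(w)$, and---using prox-boundedness and the $f$-attentive nature of $\floc$ to ensure that for $x$ near $\bar x$ the (nonempty) minimizers lie in the localization region with value near $f(\bar x)$---this becomes $x\in(I+\lambda\floc)(w)$, so locally $\prox\lambda f=(I+\lambda\floc)^{-1}$ on $U_\lambda:=\rge(I+\lambda\floc)$.

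For \ref{prop:lipschitz-Moreau-envelope-A} I would use the splitting $I+\lambda\floc=(1-\lambda/\lambda_0)I+\lambda(\floc+\lambda_0^{-1}I)$, which shows $I+\lambda\floc$ is strongly monotone with modulus $1-\lambda/\lambda_0>0$ for $\lambda<\lambda_0$. A strongly monotone operator has a single-valued, monotone inverse that is Lipschitz with constant $(1-\lambda/\lambda_0)^{-1}$ on its range; this yields single-valuedness, monotonicity and Lipschitz continuity of $\prox\lambda f$, while $0\in\floc(\bar x)$ gives $\bar x\in(I+\lambda\floc)(\bar x)$ and hence $\prox\lambda f(\bar x)=\bar x$. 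That $U_\lambda$ is genuinely a neighborhood of $\bar x$ follows from the same strong monotonicity (it forces the range to cover a ball about $\bar x=(I+\lambda\floc)(\bar x)$), for which I would cite the openness bookkeeping in \cite{Rock98}.

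For \ref{prop:lipschitz-Moreau-envelope-B}, differentiability and the first equality $\nabla\menv\lambda f=\lambda^{-1}(I-\prox\lambda f)$ I would get by reduction to the convex case: $g:=f+\tfrac{1}{2\lambda_0}\vnorm{\cdot-\bar x}^2$ has a monotone subgradient localization, hence is locally convex, and completing the square shows $\prox\lambda f(x)=\prox{\tilde\lambda}g(c)$ for an affine change of variables $c=c(x)$ and $\tilde\lambda=(\lambda^{-1}-\lambda_0^{-1})^{-1}$, so Proposition~\ref{prop:lipschitz-Moreau-envelope-convex} transfers $C^1$-ness and the gradient formula. The second equality $\lambda^{-1}(I-(I+\lambda\floc)^{-1})=(\lambda I+\floc^{-1})^{-1}$ is the purely algebraic resolvent identity: if $w=\prox\lambda f(x)$ then $y:=\lambda^{-1}(x-w)\in\floc(w)$, so $w\in\floc^{-1}(y)$ and $x=\lambda y+w\in(\lambda I+\floc^{-1})(y)$; strict continuity of $\nabla\menv\lambda f$ is then immediate from the Lipschitz continuity of $\prox\lambda f$. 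Part \ref{prop:lipschitz-Moreau-envelope-C} is the one quantitative step: with $P:=\prox\lambda f$ and $w_i=P(x_i)$, strong monotonicity gives $\scal{x_1-x_2}{w_1-w_2}\ge(1-\lambda/\lambda_0)\vnorm{w_1-w_2}^2$, and writing $a=x_1-x_2$, $b=w_1-w_2$,
\[
  \vnorm{a-b}^2=\vnorm a^2-2\scal a b+\vnorm b^2\le \vnorm a^2-(1-2\lambda/\lambda_0)\vnorm b^2\le \vnorm a^2
\]
exactly when $\lambda\le\lambda_0/2$, so $\nabla\menv\lambda f=\lambda^{-1}(I-P)$ is $\lambda^{-1}$-Lipschitz. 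Finally \ref{prop:lipschitz-Moreau-envelope-D} is immediate: $\nabla\menv\lambda f(\tilde x)=0$ forces $\tilde x=\prox\lambda f(\tilde x)$, and then $\tilde x\in(I+\lambda\floc)(\tilde x)$ yields $0\in\floc(\tilde x)\subseteq\partial f(\tilde x)$.

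The main obstacle I expect is not any single inequality but the localization bookkeeping behind the resolvent identification: rigorously guaranteeing that for $x$ near $\bar x$ the proximal subproblem is nonempty and its minimizers stay inside the region where $\partial f$ agrees with $\floc$ (so that $\prox\lambda f=(I+\lambda\floc)^{-1}$ is an honest equality, not merely an inclusion), and that $U_\lambda=\rge(I+\lambda\floc)$ contains a ball about $\bar x$. This is precisely where prox-boundedness and the $f$-attentive---rather than merely spatial---nature of the localization are essential, and it is the delicate part on which the clean operator-theoretic identities rest.
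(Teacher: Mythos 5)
Your proposal is correct and follows essentially the same route as the paper: the identification $\prox\lambda f=(I+\lambda\floc)^{-1}$ on $U_\lambda=\rge(I+\lambda\floc)$ via the prox-regularity inequality globalized by prox-boundedness, the summed two-point hypomonotonicity estimate $\scal{u'-u}{x'-x}\ge(1-\lambda/\lambda_0)\vnorm{x'-x}^2$ leading to the $\lambda\le\lambda_0/2$ Lipschitz computation in \ref{prop:lipschitz-Moreau-envelope-C}, and the resolvent inversion for \ref{prop:lipschitz-Moreau-envelope-D} are exactly the paper's steps. The only difference is that you also sketch \ref{prop:lipschitz-Moreau-envelope-A} and \ref{prop:lipschitz-Moreau-envelope-B}, which the paper simply imports from \cite[Proposition 13.37]{Rock98}; your sketch is the standard Rockafellar--Wets argument, though the claim that a monotone subgradient localization makes $g=f+\tfrac{1}{2\lambda_0}\vnorm{\cdot-\bar x}^2$ ``locally convex'' requires precisely the $f$-attentive bookkeeping you flag at the end.
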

\begin{proof}
  While Item~\ref{prop:lipschitz-Moreau-envelope-A} and~\ref{prop:lipschitz-Moreau-envelope-B} are proved in \cite[Proposition 13.37]{Rock98}, Item~\ref{prop:lipschitz-Moreau-envelope-C} (estimation of the local Lipschitz constant) and~\ref{prop:lipschitz-Moreau-envelope-D} are not explicitly verified. In order to prove Items~\ref{prop:lipschitz-Moreau-envelope-C} and~\ref{prop:lipschitz-Moreau-envelope-D}, we develop the basic objects that are required in the same way as \cite[Proposition 13.37]{Rock98}. Thus, the first part of the proof coincides with \cite[Proposition 13.37]{Rock98}.\\
  
  Without loss of generality, we can take $\bar x =0$. As $f$ is prox-bounded the condition for prox-regularity may be taken to be global, cf. \cite[Proposition 8.46(f)]{Rock98}, i.e., there exists $\eps >0$ and $\lambda_0>0$ such that
  \begin{gather} 
\label{eq:subgrad-ineq-proof}
    f(x^\prime) > f(x) + \scal{v}{x^\prime - x} - \frac {1}{2\lambda_0} \vnorm{x^\prime - x}^2 \quad \forall x^\prime\neq x \\
\label{eq:localization}
    \text{when}\ v\in \partial f(x),\ \vnorm{v} < \eps,\ \vnorm{x}<\eps,\ f(x) < f(0) + \eps \,.
  \end{gather}
  Let $\smap{\floc}{\R^N}{\R^N}$ be the $f$-attentive localization of $\partial f$ specified in \eqref{eq:localization}, i.e. the set-valued mapping defined by $\Graph\floc = \set{(x,v)\setsep v\in \partial f(x),\,\vnorm{v} < \eps,\,\vnorm{x}<\eps,\, f(x) < f(0) + \eps}$. Inequality \eqref{eq:subgrad-ineq-proof} is valid for any $\lambda \in \ivaloo{0,\lambda_0}$. Setting $u=x+\lambda v$ the subgradient inequality \eqref{eq:subgrad-ineq-proof} (with $\lambda$ instead of $\lambda_0$) implies
  \[
    f(x^\prime) + \frac{1}{2\lambda} \vnorm{x^\prime - u}^2 > f(x) + \frac{1}{2\lambda}\vnorm{x-u}^2 \,.
  \]
  Therefore, $\prox\lambda f(x+\lambda v)=\set{x}$ when $v\in \floc(x)$. In general, for any $u$ sufficiently close to $0$, thanks to Fermat's rule on the minimization problem of $\prox\lambda f(u)$, we have for any $x\in \prox\lambda f(u)$ that $v=(u-x)/\lambda \in \floc (x)$ holds. Thus, $U_\lambda=\rge(I+\lambda\floc)$ is a neighborhood of $0$ on which $\prox\lambda f$ is single-valued and coincides with $(I+\lambda \floc)^{-1}$.

  \ref{prop:lipschitz-Moreau-envelope-C} %
  Now, let $u=x+\lambda v$ and $u^\prime=x^\prime + \lambda v^\prime$ be any two elements in $U_\lambda$ such that $x=\prox\lambda f(u)$ and $x^\prime = \prox\lambda f(u^\prime)$. Then $(x,v)$ and $(x^\prime,v^\prime)$ belong to $\Graph\floc$. Therefore, we can add two copies of \eqref{eq:subgrad-ineq-proof} where in the second copy the roles of $x$ and $x^\prime$ are swapped. This sum yields for any $\lambda_1\in\ivaloo{0,\lambda_0}$ instead of $\lambda_0$ in \eqref{eq:subgrad-ineq-proof}:
  \begin{equation}\label{eq:hypromonotone}
    0 \geq \scal{v-v^\prime}{x^\prime - x} - \frac 1{\lambda_1} \vnorm{x^\prime - x}^2 \,.
  \end{equation}
  In this inequality, we substitute $v$ with $(u-x)/\lambda$ and $v^\prime$ with $(u^\prime-x^\prime)/\lambda$ which yields
  \[
    0 \leq \frac 1{\lambda_1} \vnorm{x^\prime - x}^2 + \frac 1\lambda \scal{(u^\prime-x^\prime)-(u-x)}{x^\prime - x} =  \frac 1\lambda \scal{u^\prime-u}{x^\prime - x}  +\left(\frac 1{\lambda_1} - \frac 1{\lambda}\right)  \vnorm{x^\prime - x}^2 
  \]
  or, equivalent to that $\scal{u^\prime-u}{x^\prime - x} \geq (1-\tfrac{\lambda}{\lambda_1}) \vnorm{x^\prime - x}^2$.
  
  This expression helps to estimate the local Lipschitz constant of the gradient of the Moreau envelope. Using the closed form description of $\nabla \menv \lambda f$ on $U_\lambda$, we verify the $\lambda^{-1}$-Lipschitz continuity of $\nabla \menv\lambda f$ as follows: 
  \[
    \begin{split}
    \lambda^2\vnorm{\nabla \menv\lambda f(u) - \nabla \menv \lambda f(u^\prime)}^2 - \vnorm{u-u^\prime}^2 
    =&\  \vnorm{ (u- u^\prime) - (\prox \lambda f (u) - \prox \lambda f(u^\prime))}^2 - \vnorm{u-u^\prime}^2 \\
    =&\ \vnorm{x - x^\prime}^2 - 2 \scal{u- u^\prime}{x-x^\prime}  \\
    \leq&\ (2\tfrac{\lambda}{\lambda_1}-1) \vnorm{x-x^\prime}^2 \leq 0 
    \end{split}
  \]
  when  $\lambda \leq \frac 12 \lambda_1$. \\

  \ref{prop:lipschitz-Moreau-envelope-D} %
  Now, let $\tilde x\in U_\lambda$ be a point for which $\nabla \menv \lambda f (\tilde x) = 0$ holds. Then, according to \eqref{eq:prox-reg-Moreau-formula}, we have $\tilde x = \prox\lambda f(\tilde x)$ or $\tilde x = (I + \lambda T)^{-1}(\tilde x)$ for the localization selected above. Inverting the mapping shows that $\tilde x \in \tilde x + \lambda T(\tilde x)$, which implies that $0\in T(\tilde x)$, thus $0\in \partial f(\tilde x)$.
\end{proof}
\begin{REM}
 The proof of Item (iii) of Proposition~\ref{prop:lipschitz-Moreau-envelope} is motivated by a similar derivation for distance functions and projection operators in \cite{LLM08}. See \cite{JTZ14}, for a recent analysis of the differential properties of the Moreau envelope in the infinite dimensional setting.
\end{REM}

\subsection{Heavy-ball Method on the Moreau Envelope}  \label{subsec:Heavy-ball-on-Moreau}

\begin{PROP}[inertial proximal minimization method] \label{prop:inertial-prox-min}
  Suppose $\map{f}{\R^N}{\eR}$ is prox-regular at $x^*$ for $v^*=0$ with modulus $\lambda_0>0$ and prox-bounded with threshold $\lambda_f>0$. Let $0<\lambda< \min(\lambda_f, \lambda_0/2)$, $\beta\in\ivalco{0,1}$, and $\alpha \in \ivaloo{0,2(1-\beta)\lambda}$. Suppose that $h = \menv\lambda f$ has a local minimizer $x^*$ and $H_\ipianodelta$, defined in \eqref{eq:cor:convergence-whole-seq:H-fun}, satisfies \ref{ass:local} and the KL property at $(x^*,x^*)$.
  
  Let $x^0=x^{-1}$ with $x^0\in \R^N$ and let the sequence $\seq[\k\in\N]{x^\k}$ be generated by the following update rule
  \[
     x^\kp \in (1-\alpha\lambda^{-1}) x^\k + \alpha\lambda^{-1} \prox \lambda f (x^\k) + \beta (x^\k - x^\km) \,.
  \]
  If $x_0$ is sufficiently close to $x^*$, then sequence $\seq[\k\in\N]{x^\k}$ 
  \begin{itemize}
    \item is uniquely determined,
    \item has the finite length property, 
    \item remains in a neighborhood of $x^*$, 
    \item and converges to a critical point $\tilde x$ of $f$ with $f(\tilde x) = f(x^*)$.
  \end{itemize}
  If $f$ is proper, lsc, convex, and $\lambda >0$, $\beta\in\ivalco{0,1}$, and $\alpha \in \ivaloo{0,2(1-\beta)\lambda}$, then the sequence has finite length and converges to a global minimizer $\tilde x$ of $f$ for any $x^0\in \R^N$.
\end{PROP}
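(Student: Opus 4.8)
The plan is to recognize the given iteration as the Heavy-ball method (i.e.\ iPiano with $g\equiv 0$) applied to the smooth surrogate $\tilde f := \menv\lambda f$, and then to invoke the local convergence theory already established for iPiano in Corollary~\ref{cor:local-convergence-iPiano}. First I would substitute the gradient formula from item~\ref{prop:lipschitz-Moreau-envelope-B} of Proposition~\ref{prop:lipschitz-Moreau-envelope}, namely $\nabla\menv\lambda f = \lambda^{-1}(I - \prox\lambda f)$, into the Heavy-ball update $x^\kp = x^\k + \beta(x^\k-x^\km) - \alpha\nabla\tilde f(x^\k)$ (which is exactly \eqref{eq:ipiano-gen-up} with $g\equiv 0$). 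A direct rearrangement produces precisely the stated rule $x^\kp \in (1-\alpha\lambda^{-1})x^\k + \alpha\lambda^{-1}\prox\lambda f(x^\k) + \beta(x^\k-x^\km)$, so the two iterations coincide.

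Next I would verify Assumption~\ref{ass:ipiano} for the pair $(\tilde f,0)$ on $U=\ball{r^\prime}(x^*)$ for a sufficiently small $r^\prime>0$. Since $0<\lambda<\min(\lambda_f,\lambda_0/2)$, items~\ref{prop:lipschitz-Moreau-envelope-A}--\ref{prop:lipschitz-Moreau-envelope-C} of Proposition~\ref{prop:lipschitz-Moreau-envelope} supply a neighborhood $U_\lambda=\rge(I+\lambda\floc)$ of $x^*$ on which $\prox\lambda f$ is single-valued and $\nabla\menv\lambda f$ is strictly continuous with local Lipschitz constant $\lambda^{-1}$; choosing $r^\prime$ small enough that $\ball{r^\prime}(x^*)\subset U_\lambda$ turns $\tilde f$ into a $C^1$ function with $L$-Lipschitz gradient for $L=\lambda^{-1}$ on this ball, while the requirements on $g\equiv 0$ and local boundedness are immediate. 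The Heavy-ball step-size range in Table~\ref{tab:iPiano-param-conv} is $\alpha\in\ivaloo{0,2(1-\beta)/L}$, which for $L=\lambda^{-1}$ is precisely $\alpha\in\ivaloo{0,2(1-\beta)\lambda}$, matching the hypothesis. As $x^*$ is a local minimizer of $h=\menv\lambda f$ with $\nabla\menv\lambda f(x^*)=0$, and $H_\ipianodelta$ satisfies \ref{ass:local} and the KL property at $(x^*,x^*)$ by assumption, Corollary~\ref{cor:local-convergence-iPiano} applies with $r=r^\prime$: for $x^0$ close enough to $x^*$ the sequence has finite length, remains in $\ball{r^\prime}(x^*)$, and converges to some $\tilde x\in\ball{r^\prime}(x^*)$ with $h(\tilde x)=h(x^*)$ and $0\in\partial h(\tilde x)$.

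It then remains to translate the conclusion back to $f$. Because $\tilde x\in U_\lambda$ and $h$ is $C^1$, the relation $0\in\partial h(\tilde x)$ means $\nabla\menv\lambda f(\tilde x)=0$, so item~\ref{prop:lipschitz-Moreau-envelope-D} of Proposition~\ref{prop:lipschitz-Moreau-envelope} yields $\tilde x=\prox\lambda f(\tilde x)$ and $0\in\partial f(\tilde x)$, i.e.\ $\tilde x$ is a critical point of $f$. For the value identity I would use that whenever $\prox\lambda f(y)=y$ the infimum defining $\menv\lambda f(y)$ is attained at $y$ with vanishing quadratic penalty, hence $\menv\lambda f(y)=f(y)$; applying this at $y=\tilde x$ (fixed point by item~\ref{prop:lipschitz-Moreau-envelope-D}) and at $y=x^*$ (fixed point by item~\ref{prop:lipschitz-Moreau-envelope-A}) gives $f(\tilde x)=\menv\lambda f(\tilde x)=\menv\lambda f(x^*)=f(x^*)$. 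Uniqueness of the sequence follows because the whole trajectory lies in $\ball{r^\prime}(x^*)\subset U_\lambda$, where $\prox\lambda f$ is single-valued, so the inclusion in the update is in fact an equality at every step.

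Finally, the convex case is simpler: by Proposition~\ref{prop:lipschitz-Moreau-envelope-convex}, $\menv\lambda f$ is globally $C^1$ and convex with $\lambda^{-1}$-Lipschitz gradient for every $\lambda>0$, so the same identification of the update as Heavy-ball on $\menv\lambda f$ holds globally; one may then invoke the global convergence of the (convex) Heavy-ball method, with convexity guaranteeing that every critical point is a global minimizer and that the minimizers and minimal value of $\menv\lambda f$ coincide with those of $f$, giving convergence to a global minimizer of $f$ from any starting point. The main obstacle I anticipate is the neighborhood bookkeeping in the non-convex part: one must ensure that the ball on which Assumption~\ref{ass:ipiano} and the gradient formula are valid is large enough to invoke Corollary~\ref{cor:local-convergence-iPiano}, yet that the confinement radius $r$ the corollary returns keeps the iterates inside $U_\lambda$, so that both the Heavy-ball reformulation and the single-valuedness needed for uniqueness remain valid along the entire sequence.
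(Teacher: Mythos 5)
Your proposal follows essentially the same route as the paper's proof: identify the update as the Heavy-ball method (iPiano with $g\equiv 0$) applied to $\menv\lambda f$ via the gradient formula of Proposition~\ref{prop:lipschitz-Moreau-envelope}, verify the local Lipschitz constant $L=\lambda^{-1}$ so that the step-size window matches, invoke Corollary~\ref{cor:local-convergence-iPiano} (resp.\ Proposition~\ref{prop:lipschitz-Moreau-envelope-convex} and Corollary~\ref{cor:convergence-whole-seq} in the convex case), and translate criticality back to $f$ through items \ref{prop:lipschitz-Moreau-envelope-A} and \ref{prop:lipschitz-Moreau-envelope-D}. Your explicit fixed-point argument for $f(\tilde x)=\menv\lambda f(\tilde x)=\menv\lambda f(x^*)=f(x^*)$ is a detail the paper leaves implicit, but it is the intended justification and does not change the approach.
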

\begin{proof}
  The statement is an application of the results for the Heavy-ball method (i.e. \eqref{eq:ipiano-class} with $g\equiv0$) to the Moreau envelope $\menv \lambda f$ of the function $f$. Note that $H_\ipianodelta$ inherits the KL-property from $h$ (see Remark~\ref{rem:KL-for-iPiano-Lyapunov}).

  Since $f$ is prox-bounded with threshold $\lambda_f$, the function is bounded from below and coercive for $\lambda < \lambda_f$. As $\lambda < \lambda_0/2$, Proposition~\ref{prop:lipschitz-Moreau-envelope} can be used to conclude that there exists a neighborhood $U_\lambda$ of $x^*$ such that $\menv\lambda f$ is differentiable on $U_\lambda$ and $\nabla \menv\lambda f$ is $\lambda^{-1}$-Lipschitz continuous. 

  There exists a neighborhood $U\subset U_\lambda$ of $x^*$ which contains $x_0$ and Corollary~\ref{cor:local-convergence-iPiano} can be applied. Therefore, the Heavy-ball method (Algorithm~\ref{alg:ipiano-gen} with $g\equiv 0$) with $0<\alpha < 2(1-\beta)\lambda$ and $\beta \in \ivalco{0,1}$ generates a sequence $\seq[\k\in\N]{x^k}$ that lies in $U$. Using the formula in \eqref{eq:prox-reg-Moreau-formula}, the update step of the Heavy-ball method applied to $\menv \lambda f$ reads as follows:
  \[
    \begin{split}
    x^\kp =&\  x^\k - \alpha \nabla \menv \lambda f (x^\k) + \beta(x^\k - x^\km)  \\
          =&\  x^\k - \alpha \lambda^{-1}(x^\k - \prox \lambda f (x^\k)) + \beta(x^\k - x^\km) \\
          =&\  (1- \alpha \lambda^{-1}) x^\k + \alpha \lambda^{-1} \prox \lambda f (x^\k) + \beta (x^\k - x^\km)\,.
    \end{split}
  \]
  By Proposition~\ref{prop:lipschitz-Moreau-envelope}\ref{prop:lipschitz-Moreau-envelope-A} $\prox \lambda f$ is single-valued and by Proposition~\ref{prop:lipschitz-Moreau-envelope}\ref{prop:lipschitz-Moreau-envelope-D} $0\in \partial f(\tilde x)$. The remaining statements follow follow from Corollary~\ref{cor:local-convergence-iPiano}.

  The statement about convex functions $f$ follows analogously by using Proposition~\ref{prop:lipschitz-Moreau-envelope-convex} instead of Proposition~\ref{prop:lipschitz-Moreau-envelope} and Corollary~\ref{cor:convergence-whole-seq} instead of Corollary~\ref{cor:local-convergence-iPiano}.
\end{proof}
\begin{REM}
  Corollary~\ref{cor:ipiano-conv-rates} provides a list of convergence rates for the method in Proposition~\ref{prop:inertial-prox-min}.
\end{REM}
\begin{REM}
  The question whether $h = \menv\lambda f$ has the KL property if $f$ has the KL property has been analyzed for convex functions in \cite{LP17}. For non-convex functions, this is a non-trivial open problem. 
\end{REM}

\subsection{Heavy-ball Method on the Sum of Moreau Envelopes} \label{subsec:Heavy-ball-on-sum-Moreaus}

\begin{PROP}[inertial averaged proximal minimization method] \label{prop:inertial-avrg-prox-min}
  Suppose $\map{f_i}{\R^N}{\eR}$, $i=1,\ldots,M$ are prox-regular functions at $x^*$ for $v^*=0$ with modulus $\lambda_0>0$ and prox-bounded with threshold $\lambda_f>0$. Let $0<\lambda< \min(\lambda_f, \lambda_0/2)$, $\beta\in\ivalco{0,1}$, and $\alpha \in \ivaloo{0,2(1-\beta)\lambda}$. Suppose that $h = \sum_{i=1}^M \menv\lambda f_i$ has a local minimizer $x^*$ and $H_\ipianodelta$, defined in \eqref{eq:cor:convergence-whole-seq:H-fun}, satisfies \ref{ass:local} and the KL property at $(x^*,x^*)$.
  
  Let $x^0=x^{-1}$ with $x^0\in \R^N$ and let the sequence $\seq[\k\in\N]{x^\k}$ be generated by the following update rule
  \[
     x^\kp \in (1-\alpha\lambda^{-1}) x^\k + \frac{\alpha}{M} \lambda^{-1} \sum_{i=1}^M \prox \lambda {f_i} (x^\k) + \beta (x^\k - x^\km) \,.
  \]
  If $x_0$ is sufficiently close to $x^*$, then sequence $\seq[\k\in\N]{x^\k}$ 
  \begin{itemize}
    \item is uniquely determined,
    \item has the finite length property, 
    \item remains in a neighborhood of $x^*$, 
    \item and converges to a critical point $\tilde x$ of $h$ with $h(\tilde x) = h(x^*)$.
  \end{itemize}
  If all $f_i$, $i=1,\ldots,M$ are proper, lsc, convex, and $\lambda >0$, $\beta\in\ivalco{0,1}$, and $\alpha \in \ivaloo{0,2(1-\beta)\lambda}$, then the sequence has finite length and converges to a global minimizer $\tilde x$ of $h$ for any $x^0\in \R^N$.
\end{PROP}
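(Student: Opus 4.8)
The plan is to recognize that the stated update is exactly the Heavy-ball method, i.e. \eqref{eq:ipiano-class} with $g\equiv 0$, applied to the \emph{averaged} Moreau envelope $h=\frac1M\sum_{i=1}^M\menv\lambda{f_i}$ (consistent with the averaging appearing in the update and in the name of the method), and then to invoke the local convergence result for iPiano, Corollary~\ref{cor:local-convergence-iPiano}, almost verbatim as in the single-function case of Proposition~\ref{prop:inertial-prox-min}. The only genuinely new ingredient is that the smoothness prerequisites must now be assembled from $M$ separate applications of Proposition~\ref{prop:lipschitz-Moreau-envelope}.

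First I would apply Proposition~\ref{prop:lipschitz-Moreau-envelope} to each $f_i$ individually: since each $f_i$ is prox-regular at $x^*$ for $v^*=0$ with modulus $\lambda_0$ and prox-bounded, and since $\lambda<\lambda_0/2$, there is a neighborhood $U_\lambda^{(i)}$ of $x^*$ on which $\menv\lambda{f_i}$ is differentiable with $\nabla\menv\lambda{f_i}=\lambda^{-1}(I-\prox\lambda{f_i})$ by \eqref{eq:prox-reg-Moreau-formula}, this gradient is $\lambda^{-1}$-Lipschitz by Proposition~\ref{prop:lipschitz-Moreau-envelope}\ref{prop:lipschitz-Moreau-envelope-C}, and $\prox\lambda{f_i}(x^*)=x^*$ by Proposition~\ref{prop:lipschitz-Moreau-envelope}\ref{prop:lipschitz-Moreau-envelope-A}. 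Setting $U:=\bigcap_{i=1}^M U_\lambda^{(i)}$, which is again a neighborhood of $x^*$, the averaged envelope $h$ is $\spC1$-smooth on $U$; as an average of functions with $\lambda^{-1}$-Lipschitz gradient, its own gradient is again $\lambda^{-1}$-Lipschitz, so $h$ meets Assumption~\ref{ass:ipiano} locally with $L=\lambda^{-1}$ (prox-boundedness for $\lambda<\lambda_f$ supplies the lower bound). This is precisely where averaging, rather than summing, matters: it keeps the Lipschitz constant of $\nabla h$ equal to $\lambda^{-1}$, so that the stated window $\alpha\in\ivaloo{0,2(1-\beta)\lambda}=\ivaloo{0,\frac{2(1-\beta)}{L}}$ is the admissible range from Table~\ref{tab:iPiano-param-conv}.

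Next I would rewrite the Heavy-ball update $x^\kp=x^\k-\alpha\nabla h(x^\k)+\beta(x^\k-x^\km)$ using $\nabla h(x^\k)=\lambda^{-1}\br{x^\k-\frac1M\sum_{i=1}^M\prox\lambda{f_i}(x^\k)}$, which reproduces the stated update $x^\kp=(1-\alpha\lambda^{-1})x^\k+\frac\alpha M\lambda^{-1}\sum_{i=1}^M\prox\lambda{f_i}(x^\k)+\beta(x^\k-x^\km)$ verbatim. With the hypotheses that $x^*$ is a local minimizer of $h$ and that $H_\ipianodelta$, defined in \eqref{eq:cor:convergence-whole-seq:H-fun}, satisfies \ref{ass:local} and the KL property at $(x^*,x^*)$, Corollary~\ref{cor:local-convergence-iPiano} applies on a neighborhood contained in $U$ and delivers, for $x^0$ close enough to $x^*$, unique determination of the iterates (single-valuedness of each $\prox\lambda{f_i}$ on $U$), the finite-length property, confinement to a neighborhood of $x^*$, and convergence to a critical point $\tilde x$ with $h(\tilde x)=h(x^*)$.

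The one place this proposition must \emph{differ} from Proposition~\ref{prop:inertial-prox-min} is the conclusion about the limit. In the single-function case one upgrades $\nabla\menv\lambda f(\tilde x)=0$ to criticality of $f$ via Proposition~\ref{prop:lipschitz-Moreau-envelope}\ref{prop:lipschitz-Moreau-envelope-D}. Here $\nabla h(\tilde x)=0$ only says $\sum_{i=1}^M\nabla\menv\lambda{f_i}(\tilde x)=0$, which in general does not force each summand to vanish; hence one can assert only that $\tilde x$ is a critical point of $h$ itself, which is exactly what the statement claims. I expect the main obstacle to be essentially bookkeeping rather than conceptual: verifying that the intersected neighborhood $U$ simultaneously supports all local hypotheses of Corollary~\ref{cor:local-convergence-iPiano} and that the Lipschitz modulus survives the averaging. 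Finally, the convex case follows by the identical template, with Proposition~\ref{prop:lipschitz-Moreau-envelope-convex} replacing Proposition~\ref{prop:lipschitz-Moreau-envelope} (each $\menv\lambda{f_i}$ is then globally $\spC1$ with $\lambda^{-1}$-Lipschitz gradient, so no localization is needed) and the global Corollary~\ref{cor:convergence-whole-seq} replacing the local Corollary~\ref{cor:local-convergence-iPiano}, yielding convergence to a global minimizer of $h$ for arbitrary $x^0$.
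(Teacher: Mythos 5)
Your proposal is correct and follows essentially the same route as the paper: recognize the update as the Heavy-ball method applied to the (sum of) Moreau envelopes, assemble the local smoothness of each $\menv\lambda{f_i}$ from Proposition~\ref{prop:lipschitz-Moreau-envelope}, and invoke Corollary~\ref{cor:local-convergence-iPiano} (resp.\ Corollary~\ref{cor:convergence-whole-seq} in the convex case), noting that only criticality of $h$, not of $\sum_i f_i$, can be concluded. The single cosmetic difference is that you absorb the factor $\tfrac1M$ into the objective (working with the average, so $L=\lambda^{-1}$ and step size $\alpha$), whereas the paper keeps $h=\sum_{i=1}^M\menv\lambda{f_i}$ and absorbs $\tfrac1M$ into the effective step size; the two descriptions give the identical iteration and admissible range for $\alpha$.
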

\begin{proof}
The proof is analogously to that of Proposition~\ref{prop:inertial-prox-min} except for the fact that the Heavy-ball method is applied to $\sum_{i=1}^M \menv\lambda f_i$:
  \[
    \begin{split}
    x^\kp =&\  x^\k - \frac{\alpha}M \sum_{i=1}^M \nabla \menv \lambda {f_i} (x^\k) + \beta(x^\k - x^\km)  \\
          =&\  x^\k - \frac{\alpha}M \lambda^{-1}\sum_{i=1}^M(x^\k - \prox \lambda {f_i} (x^\k)) + \beta(x^\k - x^\km) \\
          =&\  (1- \alpha \lambda^{-1}) x^\k + \frac{\alpha}{M} \lambda^{-1} \sum_{i=1}^M \prox \lambda {f_i} (x^\k) + \beta (x^\k - x^\km)\,.
    \end{split}
  \]
  Instead of scaling the feasible range of step sizes for $\alpha$, the scaling $\frac 1M$ is included in the update formula. 
\end{proof}
\begin{REM}
  Corollary~\ref{cor:ipiano-conv-rates} provides a list of convergence rates for the method in Proposition~\ref{prop:inertial-avrg-prox-min}.
\end{REM}
\begin{REM}
  In contrast to Proposition~\ref{prop:inertial-prox-min}, the sequence of iterates converges to a point $\tilde x$ for which $\sum_{i=1}^M \nabla \menv\lambda{f_i}(\tilde x) =0$ holds. We cannot directly conclude that $0\in \partial (\sum_i f_i)(\tilde x)$. However, if $\nabla \menv\lambda{f_i}(\tilde x)=0$ for all $i=1,\ldots,M$, then under suitable qualification and regularity conditions (see \cite[Corollary 10.9]{Rock98}), we can conclude that $\tilde x$ is a critical point of $\sum_{i=1}^M f_i$. 
\end{REM}
\begin{EX}[inertial averaged projection method for the semi-algebraic feasibility problem]
The algorithm described in Proposition~\ref{prop:inertial-avrg-prox-min} can be used to solve the semi-algebraic feasibility problem of Example~\ref{ex:local-convergence-feasibility}. The conditions in Example~\ref{ex:local-convergence-feasibility} are satisfied.
\end{EX}

\subsection{iPiano on an Objective Involving a Moreau Envelope}  \label{subsec:ipiano-on-Moreau}

\begin{PROP}[inertial alternating proximal minimization method] \label{prop:inertial-alternating-prox-min}
  Suppose $\map{f}{\R^N}{\eR}$ is prox-regular at $x^*$ for $v^*=0$ with modulus $\lambda_0>0$ and prox-bounded with threshold $\lambda_f>0$. Let $0<\lambda< \min(\lambda_f, \lambda_0/2)$. Moreover, suppose that $\map g{\R^N}{\eR}$ is proper, lsc, and simple. Let $x^0=x^{-1}$ with $x^0\in \R^N$ and let the sequence $\seq[\k\in\N]{x^\k}$ be generated by the following update rule
  \[
     x^\kp \in \prox {\alpha} g\left( (1-\alpha\lambda^{-1}) x^\k + \alpha\lambda^{-1} \prox \lambda f (x^\k) + \beta (x^\k - x^\km)\right) \,.
  \]
  We obtain the following cases of convergence results:
  \begin{enumerate}
    \item Assume that $h = g+ \menv\lambda f$ has a local minimizer $x^*$ and $H_\ipianodelta$, defined in \eqref{eq:cor:convergence-whole-seq:H-fun}, satisfies \ref{ass:local} and the KL property at $(x^*,x^*)$. If $x_0$ is sufficiently close to $x^*$, and $\alpha$, $\beta$ are selected according the property of $g$ in one of the last three rows of Table~\ref{tab:iPiano-param-conv} with $L=\lambda^{-1}$, then the sequence $\seq[\k\in\N]{x^\k}$ 
    \begin{itemize}
      \item has the finite length property, 
      \item remains in a neighborhood of $x^*$, 
      \item and converges to a critical point $\tilde x$ of $h$ with $h(\tilde x) = h(x^*)$.
    \end{itemize}
    \item Assume that $f$ is convex, $h = g+\menv\lambda f$ and $x^*$ is a cluster point of  $\seq[\k\in\N]{x^\k}$. Suppose $H_\ipianodelta$, defined in \eqref{eq:cor:convergence-whole-seq:H-fun}, has the KL property at $(x^*,x^*)$. Then, for any $x_0\in \R^N$, and $\alpha$, $\beta$ selected according the property of $g$ in one of the last three rows of Table~\ref{tab:iPiano-param-conv} with $L=\lambda^{-1}$, the sequence $\seq[\k\in\N]{x^\k}$ 
    \begin{itemize}
      \item has the finite length property, 
      \item and converges to a critical point $\tilde x$ of $h$ with $h(\tilde x) = h(x^*)$.
    \end{itemize}
  \end{enumerate}
  If $g$ is convex, the sequence $\seq[\k\in\N]{x^\k}$ is uniquely determined. 
\end{PROP}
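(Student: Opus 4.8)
The plan is to recognize the stated recursion as \iPiano (Algorithm~\ref{alg:ipiano-gen}) applied to the splitting $h = \menv{\lambda}{f} + g$, with $\menv{\lambda}{f}$ in the role of the smooth summand and $g$ in the role of the non-smooth summand, and then to read off the conclusions from the \iPiano convergence results already established. The first task is to make this identification explicit. Completing the square in the proximal subproblem of \eqref{eq:ipiano-gen-up} shows that its set of minimizers equals $\prox{\alpha}{g}( y^\k - \alpha\nabla\menv{\lambda}{f}(x^\k) )$, where $y^\k = x^\k + \beta(x^\k - x^\km)$. Substituting the closed-form gradient $\nabla\menv{\lambda}{f}(x^\k) = \lambda^{-1}(x^\k - \prox{\lambda}{f}(x^\k))$ furnished by \eqref{eq:prox-reg-Moreau-formula} (Proposition~\ref{prop:lipschitz-Moreau-envelope}\ref{prop:lipschitz-Moreau-envelope-B}) rewrites the argument of $\prox{\alpha}{g}$ as $(1-\alpha\lambda^{-1})x^\k + \alpha\lambda^{-1}\prox{\lambda}{f}(x^\k) + \beta(x^\k - x^\km)$, which is precisely the update in the statement. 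Hence the iterates here coincide with the \iPiano iterates for $h$, as long as the gradient formula is valid at every $x^\k$.

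For the first case I would verify Assumption~\ref{ass:ipiano} so that Corollary~\ref{cor:local-convergence-iPiano} applies. By Proposition~\ref{prop:lipschitz-Moreau-envelope}\ref{prop:lipschitz-Moreau-envelope-B} and \ref{prop:lipschitz-Moreau-envelope-C}, since $\lambda < \lambda_0/2$ there is a neighborhood $U_\lambda$ of $x^*$ on which $\menv{\lambda}{f}$ is $\mathcal C^1$ with $\lambda^{-1}$-Lipschitz gradient; after shrinking $r^\prime$ so that $\ball{r^\prime}(x^*) \subseteq U_\lambda$, one runs Algorithm~\ref{alg:ipiano-gen} with $U = \ball{r^\prime}(x^*)$, $L = \lambda^{-1}$, and with $\alpha,\beta$ taken from one of the last three rows of Table~\ref{tab:iPiano-param-conv}. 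Prox-boundedness of $f$ (threshold $\lambda_f > \lambda$) keeps $\menv{\lambda}{f}$ finite and bounded below near $x^*$, $g$ is proper, lsc and simple (in particular prox-bounded, so that $\prox{\alpha}{g}$ is well defined), and coercivity is vacuous on the bounded set $\ball{r^\prime}(x^*)$ by the remark following Assumption~\ref{ass:ipiano}. The remaining hypotheses of Corollary~\ref{cor:local-convergence-iPiano}, namely that $x^*$ is a local minimizer of $h$ and that $H_\ipianodelta$ satisfies \ref{ass:local} and the KL property at $(x^*,x^*)$, are assumed. Applying that corollary with $r = r^\prime$ yields, for $x^0$ close enough to $x^*$ with $h(x^0)$ close enough to $h(x^*)$, the finite-length property, confinement to $\ball{r^\prime}(x^*)$, and convergence to a critical point $\tilde x$ of $h$ with $h(\tilde x) = h(x^*)$.

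For the second case ($f$ convex), Proposition~\ref{prop:lipschitz-Moreau-envelope-convex} gives that $\menv{\lambda}{f}$ is globally $\mathcal C^1$ with $\lambda^{-1}$-Lipschitz gradient, so Assumption~\ref{ass:ipiano} holds with $U = \R^N$ and $L = \lambda^{-1}$ everywhere and the identification established in the first paragraph is valid globally (coercivity being replaced by the assumed existence of the cluster point $x^*$, cf. the remark after Assumption~\ref{ass:ipiano}). Invoking Corollary~\ref{cor:convergence-whole-seq} at $(x^*,x^*)$, where $H_\ipianodelta$ is assumed to have the KL property, delivers the finite-length property and convergence of the whole sequence to the critical point $\tilde x = x^*$ with $h(\tilde x) = h(x^*)$. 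For the uniqueness claim, $\prox{\lambda}{f}$ is single-valued—locally by Proposition~\ref{prop:lipschitz-Moreau-envelope}\ref{prop:lipschitz-Moreau-envelope-A}, or globally when $f$ is convex—and $\prox{\alpha}{g}$ is single-valued whenever $g$ is convex; the map defining one iteration is then single-valued, so the whole sequence is uniquely determined.

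The main obstacle is the proviso at the end of the first paragraph: the gradient formula $\nabla\menv{\lambda}{f} = \lambda^{-1}(I - \prox{\lambda}{f})$, and hence the identification of the stated recursion with \iPiano, is only available on $U_\lambda$, whereas the confinement of the iterates to $U_\lambda$ is itself part of what the convergence theorem must provide. This near-circularity is resolved by the inductive structure of the abstract local result (Theorem~\ref{thm:local-convergence-abstract} via Lemma~\ref{lem:main-theorem-convergence}): once $x^0,\dots,x^\k$ lie in the ball the gradient formula is legitimate at $x^\k$, so \ref{ass:descent} and \ref{ass:error} hold for the step producing $x^\kp$, and these are exactly the inequalities that force $x^\kp$ to remain in the ball. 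Thus the local validity of the Moreau formula and the local convergence theory reinforce each other and no genuine gap remains. A minor additional point is that reading the limit $\tilde x$ as a critical point of $h$ uses that $\menv{\lambda}{f}$ is $\mathcal C^1$ at $\tilde x \in U_\lambda$, so that $\partial h(\tilde x) = \nabla\menv{\lambda}{f}(\tilde x) + \partial g(\tilde x)$ by the exact sum rule, matching the criticality returned by the corollary.
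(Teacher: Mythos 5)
Your proposal is correct and follows essentially the same route as the paper: the paper's proof is a one-line reference to the argument of Proposition~\ref{prop:inertial-prox-min}, which likewise identifies the recursion with iPiano via the Moreau-envelope gradient formula of Proposition~\ref{prop:lipschitz-Moreau-envelope} and then invokes Corollary~\ref{cor:local-convergence-iPiano} (case (i)) or Proposition~\ref{prop:lipschitz-Moreau-envelope-convex} with Corollary~\ref{cor:convergence-whole-seq} (case (ii)). Your write-up is simply a more explicit version of that argument, including a useful discussion of why the local validity of the gradient formula does not create a circularity.
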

\begin{proof}
The proof follows analogously to that of Proposition~\ref{prop:inertial-prox-min} by, either invoking Proposition~\ref{prop:lipschitz-Moreau-envelope} and Corollary~\ref{cor:local-convergence-iPiano} or Proposition~\ref{prop:lipschitz-Moreau-envelope-convex} and Corollary~\ref{cor:convergence-whole-seq}.
\end{proof}
\begin{REM}
  Corollary~\ref{cor:ipiano-conv-rates} provides a list of convergence rates for the method in Proposition~\ref{prop:inertial-alternating-prox-min}.
\end{REM}
\begin{EX}[inertial alternating projection for the semi-algebraic feasibility problem] \label{ex:global-conv-alt-proj}\
\begin{itemize}
  \item The algorithm described in Proposition~\ref{prop:inertial-alternating-prox-min} can be used to solve the semi-algebraic feasibility problem of Example~\ref{ex:local-convergence-feasibility} with $M=2$. The conditions in Example~\ref{ex:local-convergence-feasibility} are satisfied.
  \item If $S_1$ is non-convex and $S_2$ is convex, then the second case of Proposition~\ref{prop:inertial-alternating-prox-min} yields a \emph{globally convergent relaxed alternating projection method} with $g=\ind{S_1}$ and $f=\ind{S_2}$. Table~\ref{tab:iPiano-param-conv} requires the step size conditions $\beta \in \ivalco{0,\frac 12}$ and $\alpha \in \ivaloo{0,1-2\beta}$ (note that $\lambda = 1$), which for $\beta=0$ yields $\alpha \in \ivaloo{0,1}$, which leads to the following update step:
\[
  x^\kp \in \proj {S_1}{(1-\alpha) x^\k + \alpha \,\proj{S_2}{x^\k} }
\]
\end{itemize}
\end{EX}
\begin{EX} 
The algorithm described in Proposition~\ref{prop:inertial-alternating-prox-min} can be used to solve a relaxed version of the following problem:
\[
  \min_{x_1,\ldots,x_M\in\R^N}\, \sum_{i=1}^M g_i(x_i)\,, \quad \st\ x_1=\ldots = x_M \,,
\]
where the convex constraint is replaced by the associated distance function. The functions $\map{g_i}{\R^N}{\eR}$, $i=1,\ldots,M$, $M\in\N$, are assumed to be proper, lsc, simple, and $x=(x_1,\ldots,x_M)\in \R^{N\times M}$ is the optimization variable. This problem belongs to case (ii) of Proposition~\ref{prop:inertial-alternating-prox-min}, i.e. the sequence generated by the inertial alternating proximal minimization method converges globally to a critical point $x^*$ of $\sum_{i=1}^M g_i(x_i) + \frac 12 (\dist(x,C))^2$ where $C:=\set{(x_1,\ldots,x_M)\in \R^{N\times M}\setsep x_1=\ldots = x_M}$. The proximal mapping of $\frac 12 (\dist(x,C))^2$ is the projection onto $C$, which is a simple averaging of $x_1,\ldots,x_M$.
\end{EX}

\subsection{Application: A Feasibility Problem} \label{sec:experiment-feasibility}

\renewcommand{\AA}{\mathcal A}
\newcommand{\setAA}{\mathscr A}
\newcommand{\setRR}{\mathscr R}

We consider the example from \cite{LM08} that demonstrates (local) linear convergence of the alternating projection method. 
The goal is to find an $\dimN\times\dimM$ matrix $X$ of rank $\dimR$ that satisfies a linear system of equations $\AA(X) = B$, i.e.,
\[
  \text{find}\quad X\quad \text{in}\quad \underbrace{\set{X\in \R^{\dimN\times\dimM}\setsep \AA(X) = B}}_{=:\setAA} \cap \underbrace{\set{X\in \R^{\dimN\times\dimM}\setsep \matrank(X) = \dimR}}_{=: \setRR} \,,
\]
where $\map{\AA}{\R^{\dimN\times\dimM}}{\R^\dimD}$ is a linear mapping and $B\in \R^\dimD$. Such feasibility problems are well suited for split projection methods, as the projection onto each set might be easy to conduct. The projections are given  by 
\[
  \proj{\setAA}X = X - \AA^*(\AA \AA^*)^{-1} (\AA(X) - B) \quad \text{and}\quad 
  \proj{\setRR}X = \sum_{i=1}^\dimR \sigma_i u_i v_i^\top \,,
\]
where $USV^\top$ is the singular value decomposition of $X$ with $U=(u_1,u_2,\ldots,u_\dimN)$, $V=(v_1,v_2,\ldots, v_\dimM)$ and singular values $\sigma_1\geq \sigma_2\geq \ldots \geq \sigma_\dimN$ sorted in decreasing order along the diagonal of $S$. Note that the set of rank-$R$ matrices is a $C^2$-smooth manifold \cite[Example 8.14]{Lee2003}, hence prox-regular \cite[Proposition 13.33]{Rock98}. 

We perform the same experiment as in \cite{LM08}, i.e. we randomly generate an operator $\AA$ by constructing random matrices $A_1$, \ldots, $A_D$ and setting $\AA(X) = (\scal{A_1}{X},\ldots,\scal{A_\dimD}{X})$, $\scal{A_i}{X}:=\tr(A^\top X)$, selecting $B$ such that $\AA(X)=B$ has a rank $\dimR$ solution, and the dimensions are chosen as  $\dimM=110$, $\dimN=100$, $\dimR =4$, $\dimD=450$. The performance is measured w.r.t. $\vnorm{\AA(X)-B}$ where $X$ is the result of the projection onto $\setRR$ in the current iteration. 
\begin{table}[t]
  \begin{center}
  \resizebox{\linewidth}{!}{
  \setlength{\tabcolsep}{2pt}
  \begin{tabular}{|c||c|c|c|c|c|c||c|c|c|c|c|c||c|c|c|c|c|c|}
  \hline
  \multicolumn{1}{|r||}{Precision $10^{p}$ $\rightarrow$}
  & $-2$   & $-4$   & $-6$   & $-8$   & $-10$  & $-12$   
  & $-2$   & $-4$   & $-6$   & $-8$   & $-10$  & $-12$   
  & $-2$   & $-4$   & $-6$   & $-8$   & $-10$  & $-12$   \\
  \hline
  Method
  & \multicolumn{6}{c||}{iterations} & \multicolumn{6}{c||}{time [sec]} & \multicolumn{6}{c|}{success [$\%$]} \\
  \hline\hline
  \texttt{alternating projection}
    & $235$ & $886$ &    ---   &    ---   &    ---   &    ---    
    & $ 1.88$  & $ 7.03$  &    ---   &    ---   &    ---   &    ---    
    & $ 100  $ & $97.5  $ & $   0  $ & $   0  $ & $   0  $ & $   0  $  \\
  \hline
  \texttt{averaged projection}
    & $639$ &    ---   &    ---   &    ---   &    ---   &    ---    
    & $ 5.13$  &    ---   &    ---   &    ---   &    ---   &    ---    
    & $ 100  $ & $   0  $ & $   0  $ & $   0  $ & $   0  $ & $   0  $  \\
  \hline
  \texttt{Douglas-Rachford}
    & $974$ &    ---   &    ---   &    ---   &    ---   &    ---    
    & $ 8.10$  &    ---   &    ---   &    ---   &    ---   &    ---    
    & $   2  $ & $   0  $ & $   0  $ & $   0  $ & $   0  $ & $   0  $  \\
  \hline
  \texttt{Douglas-Rachford 75}
    & $209$ & $449$ & $696$ & $949$ &    ---   &    ---    
    & $ 1.68$  & $ 3.62$  & $ 5.63$  & $ 7.66$  &    ---   &    ---    
    & $ 100  $ & $ 100  $ & $ 100  $ & $ 100  $ & $   0  $ & $   0  $  \\
  \hline
  \texttt{glob-altproj, $\alpha = 0.99$}
    & $238$ & $894$ &    ---   &    ---   &    ---   &    ---    
    & $ 1.92$  & $ 7.18$  &    ---   &    ---   &    ---   &    ---    
    & $ 100  $ & $96.5  $ & $   0  $ & $   0  $ & $   0  $ & $   0  $  \\
  \hline
  \texttt{glob-ipiano-altproj, $\beta = 0.45$}
    &    ---   &    ---   &    ---   &    ---   &    ---   &    ---    
    &    ---   &    ---   &    ---   &    ---   &    ---   &    ---    
    & $   0  $ & $   0  $ & $   0  $ & $   0  $ & $   0  $ & $   0  $  \\
  \hline
  \texttt{glob-ipiano-altproj-bt, $\beta = 0.45$}
    & $ 45$ & $ 69$ & $ 90$ & $115$ & $140$ & $166$  
    & $ 0.65$  & $ 1.03$  & $ 1.52$  & $ 2.08$  & $ 2.63$  & $ 3.20$   
    & $ 100  $ & $ 100  $ & $ 100  $ & $ 100  $ & $ 100  $ & $ 100  $  \\
  \hline
  \texttt{heur-ipiano-altproj, $\beta = 0.75$}
    & $ 59$ & $212$ & $386$ & $567$ & $749$ & $925$  
    & $ 0.79$  & $ 2.82$  & $ 5.14$  & $ 7.52$  & $ 9.93$  & $12.22$   
    & $ 100  $ & $ 100  $ & $ 100  $ & $ 100  $ & $ 100  $ & $  91  $  \\
  \hline
  \texttt{loc-heavyball-avrgproj-bt, $\beta = 0.75$}
    & $126$ & $297$ & $502$ & $717$ & $929$ &    ---    
    & $ 2.29$  & $ 5.47$  & $ 9.24$  & $13.21$  & $17.17$  &    ---    
    & $ 100  $ & $ 100  $ & $ 100  $ & $ 100  $ & $93.5  $ & $   0  $  \\
  \hline
  \texttt{loc-ipiano-altproj-bt, $\beta = 0.75$}
    & $ 66$ & $101$ & $138$ & $176$ & $214$ & $252$  
    & $ 1.32$  & $ 2.06$  & $ 2.80$  & $ 3.56$  & $ 4.31$  & $ 5.06$   
    & $ 100  $ & $ 100  $ & $ 100  $ & $ 100  $ & $ 100  $ & $ 100  $  \\
  \hline
\end{tabular}
}
\end{center}
  \caption{\label{tab:alt-avg-proj-convergence-plot}Convergence results for 200 randomly generated feasibility problem as described in Section~\ref{sec:experiment-feasibility}. The table entries show the average number of iterations and the average time that each method requires to reach a certain precision in $\set{10^{-2},10^{-4},\ldots,10^{-12}}$. A dash (\enquote{---}) means that the maximum of 1000 iterations was exceeded. The rightmost part of the table lists the success rate of achieving a certain accuracy within 1000 iterations. For a representative example, the convergence is plotted in Figure~\ref{fig:alt-avg-proj-convergence-plot}.}
\end{table}
\begin{figure}[t]
  \begin{center}
  \centering
    \begin{tikzpicture}
      \begin{axis}[%
        width=0.48\linewidth,%
        height=6.5cm,%
        xmin=0,xmax=1500,%
        xlabel=iterations $\k$,%
        ylabel=$\log_{10}\left(\ds\frac{\vnorm{\AA(X^\k)-B}}{\vnorm{\AA(X^0)-B}}\right)$,%
        legend columns=2,
        legend cell align=left,
        legend style={at={(1.0,1.02)},anchor=south,font=\footnotesize,column sep=10pt}
        ]
      \addplot[thick,blue!80!white,solid,mark=triangle,mark repeat={50},mark size={2},mark options={solid}] %
          table {\codePATH/altproj.dat};
          \addlegendentry{alternating projection};
      \addplot[thick,yellow!80!green,solid,mark=triangle,mark repeat={50},mark size={2},mark options={solid}] %
          table {\codePATH/glob-altproj.dat};
          \addlegendentry{glob-altproj, $\alpha=0.99$};
      \addplot[thick,red!80!black,solid,mark=star,mark repeat={50},mark size={2},mark options={solid}] 
          table {\codePATH/glob-ipiano-altproj-bt-0_45.dat};
          \addlegendentry{glob-ipiano-altproj-bt, $\beta=0.45$};
      \addplot[thick,cyan!80,solid,mark=diamond,mark repeat={50},mark size={2},mark options={solid}] 
          table {\codePATH/glob-ipiano-altproj-0_45.dat};
          \addlegendentry{glob-ipiano-altproj, $\beta=0.45$};
      \addplot[thick,orange!70!cyan,solid,mark=otimes,mark repeat={50},mark size={1.5},mark options={solid}] 
          table {\codePATH/loc-ipiano-altproj-bt-0_75.dat};
          \addlegendentry{loc-ipiano-altproj-bt, $\beta=0.75$};
      \addplot[thick,blue!60!black,solid,mark=triangle*,mark repeat={50},mark size={1.5},mark options={solid}] 
        table {\codePATH/heur-ipiano-altproj-0_75.dat};
        \addlegendentry{heur-ipiano-altproj, $\beta=0.75$};
      \addplot[thick,green!70!black,solid,mark=square,mark repeat={50},mark size={2},mark options={solid}] 
          table {\codePATH/avrgproj.dat};
          \addlegendentry{averaged projection};
      \addplot[thick,orange,solid,mark=square*,mark repeat={50},mark size={1.5},mark options={solid}] 
        table {\codePATH/loc-heavyball-avrgproj-bt-0_75.dat};
        \addlegendentry{loc-heavyball-avrgproj-bt, $\beta=0.75$};
      \addplot[thick,cyan!60!orange,solid,mark=diamond*,mark repeat={50},mark size={1.5},mark options={solid}] 
        table {\codePATH/dr.dat};
        \addlegendentry{Douglas-Rachford};
      \addplot[thick,violet!80!black,solid,mark=otimes*,mark repeat={50},mark size={1.5},mark options={solid}] 
        table {\codePATH/heur-dr-0075.dat};
        \addlegendentry{Douglas-Rachford 75};
      \end{axis}
      \begin{scope}[xshift=0.45\linewidth]
      \begin{axis}[%
        width=0.48\linewidth,%
        height=6.5cm,%
        xmin=0,xmax=30,%
        xlabel=time in seconds%
        ]
      \addplot[thick,blue!80!white,solid,mark=triangle,mark repeat={50},mark size={2},mark options={solid}] %
          table {\codePATH/altproj-timing.dat};
      \addplot[thick,yellow!80!green,solid,mark=triangle,mark repeat={50},mark size={2},mark options={solid}] %
          table {\codePATH/glob-altproj-timing.dat};
      \addplot[thick,red!80!black,solid,mark=star,mark repeat={50},mark size={2},mark options={solid}] 
          table {\codePATH/glob-ipiano-altproj-bt-0_45-timing.dat};
      \addplot[thick,cyan!80,solid,mark=diamond,mark repeat={50},mark size={2},mark options={solid}] 
          table {\codePATH/glob-ipiano-altproj-0_45-timing.dat};
      \addplot[thick,orange!70!cyan,solid,mark=otimes,mark repeat={50},mark size={1.5},mark options={solid}] 
          table {\codePATH/loc-ipiano-altproj-bt-0_75-timing.dat};
      \addplot[thick,blue!60!black,solid,mark=triangle*,mark repeat={50},mark size={1.5},mark options={solid}] 
          table {\codePATH/heur-ipiano-altproj-0_75-timing.dat};
      \addplot[thick,green!70!black,solid,mark=square,mark repeat={50},mark size={2},mark options={solid}] 
          table {\codePATH/avrgproj-timing.dat};
      \addplot[thick,orange,solid,mark=square*,mark repeat={50},mark size={1.5},mark options={solid}] 
          table {\codePATH/loc-heavyball-avrgproj-bt-0_75-timing.dat};
      \addplot[thick,cyan!60!orange,solid,mark=diamond*,mark repeat={50},mark size={1.5},mark options={solid}] 
          table {\codePATH/dr-timing.dat};
      \addplot[thick,violet!80!black,solid,mark=otimes*,mark repeat={50},mark size={1.5},mark options={solid}] 
          table {\codePATH/heur-dr-0075-timing.dat};
      \end{axis}
      \end{scope}
    \end{tikzpicture} 
  \end{center}
  \caption{\label{fig:alt-avg-proj-convergence-plot}Convergence plots for the feasibility problem in Section~\ref{sec:experiment-feasibility}. The inertial methods developed in this paper significantly outperforms all other methods with respect to the number of iterations (left plot) and the actual computation time (right plot).}
\end{figure}
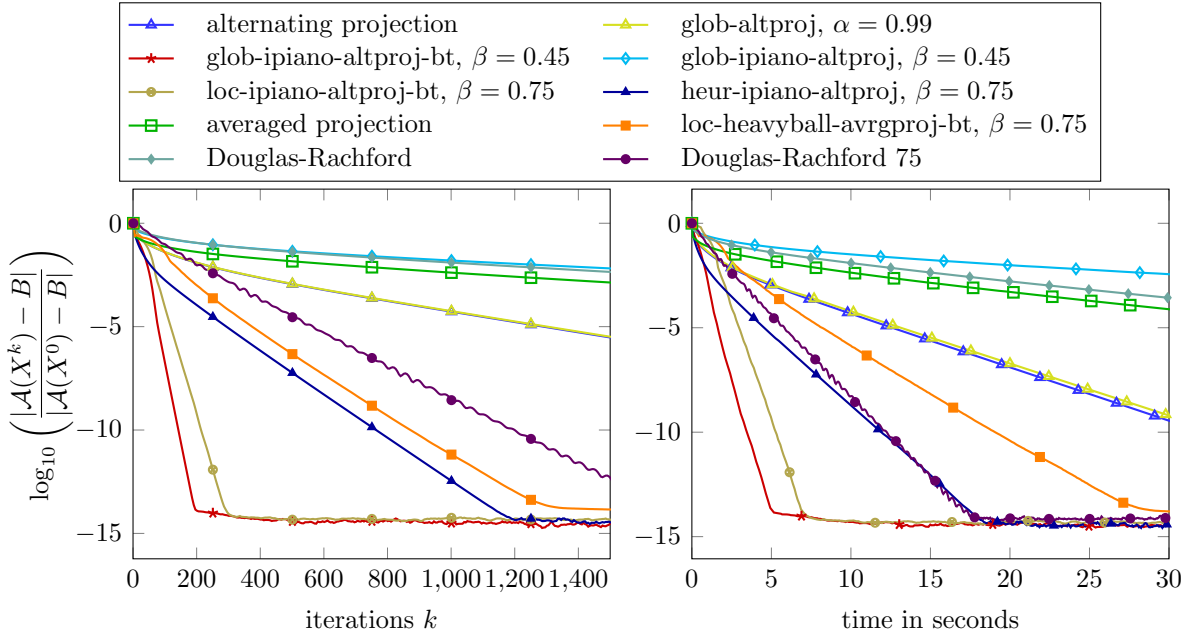

We consider the \emph{alternating projection} method $X^\kp = \proj\setRR{\proj \setAA{X^\k}}$, the \emph{averaged projection} method $X^\kp = \frac{1}{2}\left(\proj\setAA{X^\k} + \proj \setRR{X^\k}\right)$ , the globally convergent relaxed alternating projection method from Example~\ref{ex:global-conv-alt-proj} (\texttt{glob-altproj, $\alpha=0.99$}), and their inertial variants proposed in Sections~\ref{subsec:Heavy-ball-on-sum-Moreaus} and \ref{subsec:ipiano-on-Moreau}. For the Heavy-ball method/inertial averaged projection (\texttt{loc-heavyball-avrgproj-bt, $\beta=0.75$}) in Section~\ref{subsec:Heavy-ball-on-sum-Moreaus} applied to the objective $\dist(X,\setAA)^2 + \dist(X,\setRR)^2$, we use the backtracking line-search version of iPiano \cite[Algorithm 4]{OCBP14} to estimate the Lipschitz constant automatically.  For iPiano/inertial alternating projection (\texttt{glob-ipiano-altproj}) in Section~\ref{subsec:ipiano-on-Moreau} applied to $\min_{X\in\setRR}\,\frac 12 (\dist(X,\setAA))^2$ (i.e. $g$ non-convex, $f$ smooth convex), we use $\beta=0.45\in \ivalco{0,\frac12}$ and $\alpha=0.99(1-2\beta)/L$ with $L=1$, which guarantees global convergence to a stationary point, and a backtracking version (\texttt{glob-ipiano-altproj-bt}) \cite[Algorithm 5]{Ochs15}. Moreover, for the same setting, we use a heuristic version (\texttt{heur-ipiano-altproj, $\beta=0.75$}, theoretically infeasible) with $\alpha$ such that $\alpha\lambda^{-1}=1$ in Proposition~\ref{prop:inertial-alternating-prox-min}. Finally, we also consider the locally convergent version of iPiano in Proposition~\ref{prop:inertial-alternating-prox-min} (\texttt{loc-ipiano-altproj-bt, $\beta=0.75$}) applied to the objective\footnote{The error is measured after projecting the current iterate to the set of rank $R$ matrices.} $\min_{X\in\setAA}\,\frac 12 (\dist(X,\setRR))^2$ (i.e. $g$ convex, $f$ prox-regular, non-convex) with backtracking. For the local convergence results, we assume that we start close enough to a feasible point. Experimentally, all algorithms converge to a feasible point. In theory, backtracking is not required, however as the radius of the neighborhood of attraction is hard to quantify, the algorithm is more stable with backtracking.

We also compare our method against the recently proposed globally convergent Douglas-Rachford splitting for non-convex feasibility problems \cite{LP16a}.  The algorithm depends on a parameter $\gamma$, which in theory is required to be rather small: $\gamma_0:=\sqrt{3/2}-1$. The basic model \texttt{Douglas-Rachford} uses the maximal feasible value for this $\gamma$-parameter. \texttt{Douglas-Rachford 75} is a heuristic version\footnote{The heuristic version of Douglas--Rachford splitting in \cite{LP16a} guarantees boundedness of the iterates. We set $\gamma=150\gamma_0$ and update $\gamma$ by $\max(\gamma/2,0.9999\gamma_0)$ if $\norm{y^\k - y^\km} > t/\k$. We refer to \cite{LP16a} for the meaning of $y^\k$. Since the proposed value $t=1000$ did not work well in our experiment, we optimized $t$ manually. $t=75$ worked best.} proposed in \cite{LP16a}. 

Table~\ref{tab:alt-avg-proj-convergence-plot} compares the methods on a set of 200 randomly generated problems with a maximum of 1000 iterations for each method. Also local methods seem to reliably find a feasible point. This seems to be true also for the heuristic methods \texttt{Douglas-Rachford 75} and \texttt{heur-ipiano-altproj}, which shows that there is still a gap between theory and practice. The inertial algorithms that use backtracking significantly outperform methods without backtracking or inertia. Considering the actual computation time makes this observation even more significant, since backtracking algorithms require to compute the objective value several times per iteration. Interestingly, the globally convergent version of iPiano converged the fastest to a feasible point. The convergence behavior of the methods is visualized in Figure~\ref{fig:alt-avg-proj-convergence-plot} for a representative example.

\section{Conclusions}

In this paper, we proved a local convergence result for abstract descent methods, which is similar to that of Attouch et al. \cite{ABS13}. This local convergence result is applicable to an inertial forward--backward splitting method, called iPiano \cite{OCBP14}. For functions that satisfy the \KL inequality at a local optimum, under a certain growth condition, we verified that the sequence of iterates stays in a neighborhood of a local (or global) minimum and converges to the minimum. As a consequence, the properties that imply convergence of iPiano is required to hold locally only. Combined with a well-known expression for the gradient of Moreau envelopes in terms of the proximal mapping, relations of iPiano to an inertial averaged proximal minimization method and an inertial alternating proximal minimization method are uncovered. These considerations are conducted for functions that are prox-regular instead of the stronger assumption of convexity. For a non-convex feasibility problem, experimentally, iPiano significantly outperforms the alternating projection method and a recently proposed non-convex variant of Douglas--Rachford splitting.

{\small
\bibliographystyle{ieee}
\bibliography{ochs}
}

\end{document}